\documentclass[preprintnumbers,amsmath,11pt,amssymb,floatfix,superscriptaddress,nofootinbib]{article}
\usepackage{amsmath,amssymb,latexsym,textcomp,mathrsfs}
\usepackage[all]{xy}
\usepackage{graphicx}
\usepackage{bm,amsmath, amsthm, amssymb, amsfonts}
\usepackage{times}
\usepackage[english]{babel}
\usepackage{setspace}
\onehalfspacing
\setlength{\textheight}{220mm} \setlength{\textwidth}{155mm}
\setlength{\oddsidemargin}{1.25mm}
\setlength{\evensidemargin}{1.25mm} \setlength{\topmargin}{0mm}

\setbox0=\hbox{$+$}
\newdimen\plusheight
\plusheight=\ht0
\def\+{\;\lower\plusheight\hbox{$+$}\;}

\setbox0=\hbox{$-$}
\newdimen\minusheight
\minusheight=\ht0
\def\-{\;\lower\minusheight\hbox{$-$}\;}

\setbox0=\hbox{$\cdots$}
\newdimen\cdotsheight
\cdotsheight=\plusheight%\ht0
\def\cds{\lower\cdotsheight\hbox{$\cdots$}}

\numberwithin{equation}{section}
\theoremstyle{plain}
\newtheorem{theorem}{Theorem}[section]
\newtheorem{lemma}{Lemma}[section]

\newtheorem{example}{Example}[section]
\newtheorem{definition}{Definition}[section]

\newtheorem{proposition}{Proposition}[section]

\newtheorem{note}{Note}[section]

\def\mytitle#1{\setcounter{equation}{0}
\setcounter{footnote}{0}
\begin{flushleft}
\Large\textbf{#1}
\end{flushleft}
\vspace{0.20cm}}
\def\myname#1{\leftline{#1}\vspace{-0.13cm}}
\def\myplace#1#2{\small\begin{flushleft}\textit{#1}\\
\texttt{#2}\end{flushleft}}

\def\myclassification#1{\small\noindent
Keywords : $I^K$-convergence, $I^{K^*}$-convergence, AP$(I,K)$-condition, $P$-ideals,  $I^K$-limit points.
       #1\vspace{0.5cm}\\
			AMS subject classification$(2010)$: Primary: $54$A$20$; Secondary: 40A05; 40A35}
\usepackage{graphicx}% Include figure files
\usepackage{amsmath}

\begin{document}
\mytitle{A Note on $I^K$ and $I^{K^*}$-convergence in topological spaces}

\myname{$Amar~ Kumar~ Banerjee^{\dag}$\footnote{akbanerjee@math.buruniv.ac.in, akbanerjee1971@gmail.com} and  $Mahendranath~ Paul^{\dag}$\footnote{mahendrabktpp@gmail.com}}
\myplace{$\dag$Department of Mathematics, The University of Burdwan, Purba Burdwan$-$713104, India.} {}
\begin{abstract}
In this paper we have studied some important topological properties and characterization of $I^K$-convergence of functions which is a common generalization of $I^*$-convergence of functions. We also introduce the idea of  $I^{K^*}$-convergence and $I^{K}$-limit points of functions.
\end{abstract}
%%%%%%%%%%%%%%%%%%%%%%%%%%%%%%%%%%%%%%%
\myclassification{}
%%%%%%%%%%%%%%%%%%%%%%%%%%%%%%%%%%%%%%%
\section{Introduction and Background}
The aim of this paper is to study the notion of $I^K$ and $I^{K^*}$-convergence of functions which are the common generalization of various type of $I$ and $I^*$-convergence of functions in some restriction. Let us start with brief discussion on two types of ideal convergence.\\
The concept of usual convergence of a real sequence has been extended to statistical convergence by H. Fast\cite{10} and then H. Steinhaus\cite{30} in the year $1951$. Now we recall natural density of a set $K\subset \mathbb{N}$ where $\mathbb N$ denotes the set of natural numbers. Let $K_n$ denote the set $\{k \in K:k\leq n$\} and $|K_n|$ stands for the cardinality of $K_n$.The natural density of $K$ is defined by
$$d(K)=\displaystyle{\lim_{n}}\frac{|K_n|}{n}$$
if the limit exits. A real sequence $\{x_n\}$ is said to be statistically convergent to $l$ if for every $\epsilon>0$ the set $K(\epsilon)=\{k\in\mathbb N:|x_k-l|\geq \epsilon\}$ has natural density zero\cite{10,12,30}. Ordinary convergence always implies statistical convergence\cite{24,28,29}. Later it was developed by many authors and after long $50$ years, the concept of statistical convergence has been extended to $I$ and $I^*$-convergence which depends on the structure of ideals of subsets of the natural numbers by P.Kostyrko et al\cite{18,19,20}.
The concept of $I^*$-convergence which is closely related to that of $I$-convergence and which arises from a particular result on statistical convergence of real sequence was introduced  by P.Kostyrko et al. The result is as follows:\\
A real sequence $\{x_n\}$ is statistically convergent to $\xi$ if and only if there exist a set $M=\{m_1<m_2<m_3<...<m_k<...\}$ such that $d(M)=1$ and $\displaystyle{\lim_{k}}x_{m_k}=\xi$. \cite{18,19}\\
If $I$ is an admissible ideal, $I^*$-convergence implies  $I$-convergence. But converse may not be true. Moreover a statistical convergent sequence and $I$ and $I^*$- convergent sequence need not even be bounded\cite{17,29}. $I$ and $I^*$-convergence coincide for every admissible ideal $I$ if the space is discrete or if $I$ satisfies AP($I$,Fin)-condition.\cite{7,18}. B.K.Lahiri and Pratulananda Das in the year $2005$, extended the concept of $I$ and $I^*$-convergence in a topological space and they observed that the basic properties are preserved also in a topological space\cite{17}. Later many works on $I$-convergence were done in topological spaces\cite{2,3,4,4.2,4.3,4.4}.\\
In the year 2010, M. Macaj and M. Sleziak\cite{25} defined $I^K$-convergence and shew that this type of convergence is a common generalization for all types of $I$ and $I^*$-convergence  we have mentioned so far. They also gave the condition AP$(I,K)$ modifying condition AP from \cite{7,19}. Later in the year 2014, $I^K$-Cauchy and $I^K$-Cauchy net have been studied in \cite{9,26}.\\
In this paper we have studied further some basic properties of $I^K$-convergence of functions in topological spaces which were not studied before. Also we have defined the notion of $I^{K^*}$-convergence and have found out the relation between $I,I^*,K^*,I^{K^*}$ and $I^K$-convergence of functions. While studying the convergence of functions, several closely related notions occur quite naturally such as limit points, cluster points etc. In the last section we have introduced $I^K$-limit points and examined some important topological properties like characterization of compactness in terms of $I^K$-limit points.
\section{Basic Definition and Notation}
\begin{definition}
Let $S$ be a non-void set then a family of sets $I\subset 2^S$ is said to be an ideal if 
\item (i) $A,B\in I \Rightarrow A\cup B\in I$
\item(ii) $A\in I, B\subset A \Rightarrow B\in I$
\end{definition}
$I$ is called nontrivial ideal if $S\notin I$ and $ I\neq \{\phi\} $. In view of condition (ii) $\phi\in I $ i.e. an ideal is a non-void system of sets $I$ hereditary with respect to additive and inclusion. If $I \subsetneq 2^S$ we say that $I$ is proper ideal on $S$. Several examples of non-trivial ideals are seen in \cite{19}.
A nontrivial ideal $I$ is called admissible if it contains all the singleton of $\mathbb N$. A nontrivial ideal $I$ is called non-admissible if it is not admissible. An example of an admissible ideal on a set $S$ is the ideal of all finite subsets of $S$ which we shall denote by Fin$(S)$. If $S=\mathbb N$ then we write Fin instead of Fin$(\mathbb N)$ for short. 
\begin{example}
Let $I$ be the class of all $A\subset \mathbb N$ with $d(A)=0$. Then $I$ is an admissible ideal of $\mathbb{N}$, since singleton sets has density zero. For any proper subset $M\subset \mathbb N$, $I=2^M$ is an non-admissible ideal of $\mathbb{N}$.
\end{example}
\begin{note}
The dual notion to the ideal is the notion of the filter i.e. a filter on $S$ is non-void system of subsets of $S$, which is closed under finite intersection and super sets.
If $I$ is a non-trivial ideal on $X$ then $F=F(I)=\{A\subset X:X\setminus A \in I \}$ is clearly a filter on $X$ and conversely. $F(I)$ is called associated filter with respect to ideal $I$.
\end{note}
Now we will give the definition of $I$-convergence using function instead of sequence.
\begin{definition}\cite{25}
Let $I$ be an ideal on a non-void set $S$ and $X$ be a topological space. A function $f:S\rightarrow X$ is said to be $I$-convergent to $x\in X$, written as $I$-$\lim f=x$ if
$$f^{-1}(U)=\{s\in S:f(s)\in U\}\in F(I)$$ 
for every neighborhood $U$ of the point $x$. i.e. if $f^{-1}(X \setminus U)=\{s\in S:f(s)\notin U\}\in I$ for every neighborhood $U$ of $x$.
\end{definition}
If $S=\mathbb N$ we obtain the usual definition of $I$-convergence of sequence.
\begin{definition}\cite{25}
Let $I$ be an ideal on a set $S$ and let $f:S\rightarrow X$ be a function to a topological space $X$. The function f is called $I^*$-convergent to the point $x$ of $X$ if there exists a set $M\in F(I)$ such that the function $g:S\rightarrow X$ defined by
\[g(s)=\left\{\begin {array}{ll}
        f(s) & \mbox{if $s\in M$} \\
		x & \mbox{if $s\notin M$}
		\end{array}
		\right. \]
is Fin$(S)$-convergent to $x$. 
\end{definition} 
If $f$ is $I^*$-convergent to $x$, then we write $I^*$-$\lim f=x$. The usual notion of $I^*$-convergence of sequence is a special case when $S=\mathbb N$. $I^K$-convergence as a common generalization of all types of $I^*$-convergence of sequences or functions from $S$ into $X$. Here we will work with functions from a non-void arbitrary set $S$ to a topological space $X$. One of the reasons is that using functions sometimes helps to simplify notation.
\begin{definition}\cite{25}
Let $K$ and $I$ be an ideal on a non-void set $S$, $X$ be a topological space and let $x$ be an element of $X$. A function $f:S\rightarrow X$  is called $I^K$-convergent to the point $x$ if there exists a set $M\in F(I)$ such that the function $g:S\rightarrow X$ given by
\[g(s)=\left\{\begin {array}{ll}
        f(s) & \mbox{if $s\in M$} \\
		x & \mbox{if $s\notin M$}
		\end{array}
		\right. \]
is $K$-convergent to $x$. 
\end{definition} 
If $f$ is $I^K$-convergent to $x$, then we write $I^K$-$\lim f=x$.
As usual, notion of $I^K$-convergence of sequence is a special case for $S=\mathbb N$. Similarly as for $I$-convergence of sequences. We write $I^K$-$\lim x_n=x$.
\begin{lemma}\cite{25}\label{0}
If $I$ and $K$ are ideals on a set $S$ and $f:S\rightarrow X$ is a function such that $K$-$\lim f=x$, then $I^K$-$\lim f=x$.
\end{lemma}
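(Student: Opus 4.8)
The plan is to verify the definition of $I^K$-convergence directly, using the largest possible witness set, namely $M = S$. First I would observe that $S \in F(I)$. Indeed, the associated filter is $F(I) = \{A \subset S : S \setminus A \in I\}$, and since every ideal contains $\phi$ (this follows from condition (ii) in the definition of an ideal), we have $S \setminus S = \phi \in I$, so $S$ does lie in $F(I)$. This supplies a legitimate choice of the set $M$ demanded by the definition of $I^K$-convergence.

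With this choice the next step is simply to read off the modified function $g$. Since $M = S$, every $s \in S$ satisfies $s \in M$, so the defining formula collapses to $g(s) = f(s)$ for all $s \in S$; that is, $g = f$ as functions on $S$. By hypothesis we are given $K$-$\lim f = x$, and therefore $g$, being literally equal to $f$, is $K$-convergent to $x$. This is precisely the condition that the definition of $I^K$-convergence requires of the witness set $M$, so we conclude $I^K$-$\lim f = x$.

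I do not anticipate any genuine obstacle here: once the correct witness set is chosen the statement is essentially a tautology, and the only point that merits an explicit line is the membership $S \in F(I)$, which is immediate from $\phi \in I$. Conceptually the lemma merely records that ordinary $K$-convergence is the degenerate instance of $I^K$-convergence obtained by letting the filter set exhaust all of $S$, so no correction term is needed.
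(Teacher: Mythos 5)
Your proof is correct and is exactly the standard argument: the paper itself does not reprove this lemma (it cites it from Macaj--Sleziak \cite{25}), and the proof given there is precisely your choice of witness $M=S$, which lies in $F(I)$ because $\phi\in I$, making $g=f$ and reducing the claim to the hypothesis $K$-$\lim f=x$. Nothing is missing; the remark that $S\in F(I)$ is the only step requiring justification, and you supplied it.
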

\begin{theorem}\cite{25}
Let $I$,$K$ be ideals on a set $S$, $X$ be a topological space and let $f$ be a function from $S$ to $X$ then $I^K$-$\lim f=x\Rightarrow I$-$\lim f=x$ if and only if $K\subset I$.
\end{theorem}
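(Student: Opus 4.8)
The statement is a biconditional, so the plan is to prove the two implications separately, handling the direction ``$K\subseteq I$ suffices'' by a direct set-theoretic decomposition and the converse by a contrapositive construction. Throughout I will use that for ideals on $S$ the condition $M\in F(I)$ means exactly $S\setminus M\in I$.

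For the direction assuming $K\subseteq I$, I would start from the hypothesis $I^K$-$\lim f=x$, which supplies a set $M\in F(I)$ such that the modified function $g$ (equal to $f$ on $M$ and to $x$ off $M$) is $K$-convergent to $x$. Fixing an arbitrary neighbourhood $U$ of $x$, the goal is to show $f^{-1}(X\setminus U)\in I$. The key is the splitting
$$f^{-1}(X\setminus U)=\big(M\cap f^{-1}(X\setminus U)\big)\cup\big((S\setminus M)\cap f^{-1}(X\setminus U)\big).$$
On $M$ we have $g=f$, so the first piece equals $M\cap g^{-1}(X\setminus U)\subseteq g^{-1}(X\setminus U)$, which lies in $K$ by $K$-convergence of $g$, hence in $I$ since $K\subseteq I$ and $I$ is hereditary. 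The second piece is contained in $S\setminus M\in I$, so it too is in $I$. As an ideal is closed under finite unions, $f^{-1}(X\setminus U)\in I$, giving $I$-$\lim f=x$.

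For the converse I would argue contrapositively: assuming $K\not\subseteq I$, I must exhibit a space, a point and a function witnessing $I^K$-$\lim f=x$ while $I$-$\lim f\neq x$. I would pick $A\in K\setminus I$ and work in a space $X$ possessing two points $x\neq y$ with an open neighbourhood of $x$ missing $y$ (for instance $X=\mathbb{R}$ with $x=0$, $y=1$), defining $f$ to equal $y$ on $A$ and $x$ off $A$. Then $f$ is itself $K$-convergent to $x$, because for any neighbourhood $U$ of $x$ the set $f^{-1}(X\setminus U)$ is either empty or equal to $A$, and both belong to $K$. By Lemma \ref{0}, $K$-convergence yields $I^K$-$\lim f=x$. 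Choosing a neighbourhood $U$ of $x$ with $y\notin U$ gives $f^{-1}(X\setminus U)=A\notin I$, so $f$ is not $I$-convergent to $x$, which establishes the contrapositive.

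I expect the converse to be the delicate part, and the point to get right there is the choice of target space: the construction needs a point $x$ together with a neighbourhood separating it from a second value, so one must either quantify over admissible $X$ or fix $X$ rich enough (any $T_1$ space with at least two points suffices). The forward direction is essentially bookkeeping, the only care being to invoke $K\subseteq I$ at precisely the step where $g^{-1}(X\setminus U)\in K$ is promoted to membership in $I$.
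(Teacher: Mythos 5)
The paper states this theorem without proof --- it is quoted directly from Macaj and Sleziak \cite{25} --- so there is no internal argument to compare yours against; judged on its own merits, your proof is correct, and it is in substance the standard argument for this result. Your forward direction is sound: splitting $f^{-1}(X\setminus U)$ into $M\cap g^{-1}(X\setminus U)$, which lies in $K$ and hence in $I$ by $K\subseteq I$, and a subset of $S\setminus M\in I$, uses exactly the heredity and finite-union properties of ideals. Your converse correctly combines a witness $A\in K\setminus I$, the two-valued function, its $K$-convergence, and the paper's Lemma \ref{0} to produce $I^K$-convergence without $I$-convergence. You are also right that the quantifier structure is the one delicate point: read literally with $X$ and $f$ fixed, the ``only if'' half is false (for an indiscrete $X$ every function is $I$-convergent to every point, so the implication holds even when $K\not\subseteq I$); the statement must be understood as quantifying over all spaces and functions --- or over a fixed $X$ rich enough to separate two points by an open set --- which is exactly how your construction proceeds.
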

\begin{proposition}\cite{25}
Let $I,I_1,I_2,K,K_1$ and $K_2$ be ideals on a set $S$ such that $I_1\subset I_2$ and $K_1\subset K_2$ and let $X$ be a topological space. Then for any function $f:S\rightarrow X$, we have
$I_1^K$-$\lim f=x ~~~ \Rightarrow ~~~ I_2^K$-$\lim f=x$ and
$I^{K_1}$-$\lim f=x ~~~ \Rightarrow ~~~ I^{K_2}$-$\lim f=x$.
\end{proposition}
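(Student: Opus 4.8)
The plan is to prove each of the two implications separately, and in both cases to reuse the same witnessing set $M$ that the hypothesis already supplies. The guiding idea is that enlarging the ideal $I$ enlarges its associated filter $F(I)$, while enlarging the ideal $K$ enlarges the class of $K$-convergent functions; in either case the object witnessing the original convergence survives unchanged.

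First I would treat the implication coming from $I_1 \subset I_2$. Assume $I_1^K$-$\lim f = x$. By the definition of $I^K$-convergence there is a set $M \in F(I_1)$ such that the auxiliary function $g$ (equal to $f$ on $M$ and equal to $x$ off $M$) is $K$-convergent to $x$. The key observation is that $I_1 \subset I_2$ forces $F(I_1) \subset F(I_2)$: indeed $M \in F(I_1)$ means $S \setminus M \in I_1 \subset I_2$, hence $S \setminus M \in I_2$ and so $M \in F(I_2)$. Since neither $K$ nor $g$ has changed, the same set $M \in F(I_2)$ and the same function $g$ witness $I_2^K$-$\lim f = x$.

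Next I would treat the implication coming from $K_1 \subset K_2$. Assume $I^{K_1}$-$\lim f = x$, so there is a set $M \in F(I)$ for which the auxiliary function $g$ is $K_1$-convergent to $x$. Here I would invoke the monotonicity of ordinary ideal convergence: $g$ being $K_1$-convergent to $x$ means $g^{-1}(X \setminus U) \in K_1$ for every neighborhood $U$ of $x$, and since $K_1 \subset K_2$ this yields $g^{-1}(X \setminus U) \in K_2$ for every such $U$, i.e. $g$ is $K_2$-convergent to $x$. The set $M \in F(I)$ is unaffected by changing $K$, so the same $M$ and $g$ witness $I^{K_2}$-$\lim f = x$.

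There is no substantial obstacle here: both halves reduce to the two elementary monotonicity facts $I_1 \subset I_2 \Rightarrow F(I_1) \subset F(I_2)$ and $K_1 \subset K_2 \Rightarrow (K_1\text{-convergence} \Rightarrow K_2\text{-convergence})$, each a one-line verification from the definition of the associated filter and of $K$-convergence respectively. The only point requiring care is to keep fixed whichever ideal is not being varied, so that the witnessing pair $(M,g)$ transfers verbatim from one notion of convergence to the other.
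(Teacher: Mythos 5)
Your proof is correct, and it follows the only natural route --- which is exactly the argument in the cited source [25], since this proposition is quoted in the paper without its own proof: the witnessing pair $(M,g)$ transfers verbatim once one notes $I_1\subset I_2 \Rightarrow F(I_1)\subset F(I_2)$ and $K_1\subset K_2 \Rightarrow$ ($K_1$-convergence implies $K_2$-convergence). Both monotonicity facts are verified correctly, so nothing is missing.
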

\section{Basic Properties of $I^K$-Convergence in Topological Spaces}
Throughout the paper $X$ stands for a topological space $(X,\tau)$ and $I$, $K$ are non-trivial ideals of a non empty set $S$ unless otherwise stated. First we introduce a construction regarding double ideal. For any two ideals  $I,K$ on a non-void set $S$ we have the ideal $$I\vee K=\{A\cup B:A\in I,B\in K\}$$
which is the smallest ideal containing both $I$ and $K$ on $S$ i.e. $I,K\subseteq I\vee K$. It is clear that if $I\vee K$ is non-trivial and $I$ and $K$ are both proper subset of $I\vee K$ then $I$ and $K$ both are non-trivial. But converse part may not be true. To support this following examples are given.
\begin{example}
Consider the two sets $N_1=\{4n:n\in \mathbb{N}\}$ and $N_2=\{4n-1:n\in \mathbb{N}\}$ now it is clear that $2^{N_1}$, $2^{N_2}$ and $2^{N_1}\vee 2^{N_2}$ all are non-trivial ideal on $\mathbb{N}$.
\end{example}
\begin{example}
Now let  $N_1$ be set of all odd integers and $N_2$ be set of all even integers. Then it is clear that $I=2^{N_1}$, $K=2^{N_2}$ both are non-trivial ideals on $\mathbb{N}$ but $I\vee K$ is a trivial ideal on $\mathbb{N}$.
\end{example}
If $I\vee K$ is a non-trivial on $X$ then the dual filter is $F(I\vee K)=\{G\cap H:G\in F(I),H\in F(K)\}$.
\begin{theorem}
Let $I\vee K$ is non-trivial on set $S$. If $X$ is Hausdorff and a function $f:S\rightarrow X$ is $I^K$-convergent then $f$ has a unique $I^K$-limit.
\end{theorem}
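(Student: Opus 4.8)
The plan is to argue by contradiction, exploiting the Hausdorff separation together with the non-triviality of $I\vee K$. Suppose $f$ were $I^K$-convergent to two distinct points $x$ and $y$. Since $X$ is Hausdorff, I would first choose disjoint open neighbourhoods $U$ of $x$ and $V$ of $y$. Disjointness gives $(X\setminus U)\cup(X\setminus V)=X$, and applying $f^{-1}$ yields the identity $f^{-1}(X\setminus U)\cup f^{-1}(X\setminus V)=S$, which will drive the whole argument.

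Next I would unwind the definition of $I^K$-convergence at each limit. For the point $x$ there is a set $M_1\in F(I)$ such that the modified function $g_1$, equal to $f$ on $M_1$ and equal to $x$ off $M_1$, is $K$-convergent to $x$; applied to the neighbourhood $U$ this gives $g_1^{-1}(X\setminus U)\in K$. A direct check of the definition of $g_1$ shows $g_1^{-1}(X\setminus U)=M_1\cap f^{-1}(X\setminus U)$, since off $M_1$ the value is $x\in U$. Symmetrically, convergence to $y$ produces $M_2\in F(I)$ with $M_2\cap f^{-1}(X\setminus V)\in K$.

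The heart of the argument is to intersect the two good index sets and feed in the separation of $U$ and $V$. Since $M_1,M_2\in F(I)$ and a filter is closed under finite intersection, $M_1\cap M_2\in F(I)$. On the other hand, writing $M_1\cap M_2=(M_1\cap M_2)\cap S$ and substituting the identity $S=f^{-1}(X\setminus U)\cup f^{-1}(X\setminus V)$, I would obtain
$$M_1\cap M_2\subseteq [M_1\cap f^{-1}(X\setminus U)]\cup[M_2\cap f^{-1}(X\setminus V)],$$
and the right-hand side lies in $K$ as a union of two members of $K$. Heredity of the ideal $K$ then gives $M_1\cap M_2\in K$.

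Finally I would combine the two facts about $M_1\cap M_2$. Because $M_1\cap M_2\in F(I)$, its complement $S\setminus(M_1\cap M_2)$ lies in $I$, while $M_1\cap M_2$ itself lies in $K$. Writing $S=(S\setminus(M_1\cap M_2))\cup(M_1\cap M_2)$ therefore exhibits $S$ as a union of a member of $I$ and a member of $K$, so $S\in I\vee K$ by the definition of the join ideal. This contradicts the assumed non-triviality of $I\vee K$, and the contradiction forces $x=y$. I expect the only delicate points to be the set identity $g_1^{-1}(X\setminus U)=M_1\cap f^{-1}(X\setminus U)$ and the containment displayed above; once these are established, the hypothesis that $I\vee K$ is non-trivial is exactly what prevents $S$ from being split into an $I$-part and a $K$-part, delivering the contradiction.
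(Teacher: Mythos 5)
Your proposal is correct and follows essentially the same path as the paper's proof: Hausdorff separation, the preimage computation showing $M_i\setminus f^{-1}(\cdot)\in K$ for each limit, intersection of the two sets from $F(I)$, disjointness of $U$ and $V$ forcing $M_1\cap M_2\in K$, and finally the clash with non-triviality of $I\vee K$. The only cosmetic difference is that you close the argument on the ideal side (exhibiting $S$ as a union of a member of $I$ and a member of $K$, so $S\in I\vee K$), whereas the paper dualizes and derives $\phi\in F(I\vee K)$ from the formula $F(I\vee K)=\{G\cap H:G\in F(I),\,H\in F(K)\}$; these are equivalent statements, and your version is, if anything, slightly more direct.
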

\begin{proof}
If possible let us consider that the function $f$ has two distinct $I^K$-limits say $x$ and $y$. Since $X$ is Hausdorff then there exists $U,V\in \tau$ such that $x\in U$ and $y\in V$ and $U\cap V= \phi$. Since $f$ has $I^K$-limit $x$, so from the definition of $I^K$-limit, there exists a set $A_1\in F(I)$ such that the function $g:S \rightarrow X$ given by  
\[g(s)=\left\{\begin {array}{ll}
        f(s) & \mbox{if $s\in A_1$} \\
		x & \mbox{if $s\notin A_1$}
		\end{array}
		\right. \]
is $K$-convergent to $x$. So, $g^{-1}(U)=\{s\in S:g(s)\in U\}=\{s\in A_1:g(s)\in U\}\cup \{s\in S\setminus A_1:g(s)\in U\}=(S\setminus A_1)\cup f^{-1}(U)=S \setminus (A_1\setminus f^{-1}(U))\in F(K)$
i.e. $A_1\setminus f^{-1}(U)\in K$ or $A_1\setminus B_1\in K$ where $B_1= f^{-1}(U)$. Similarly, $f$ has $I^K$-limit $y$ so there exists a set $A_2\in F(I)$ s.t. $A_2\setminus f^{-1}(V)\in K$ or $A_2\setminus B_2\in K$ where $B_2=f^{-1}(V)$.
So,
\begin{equation}\label{1}
(A_1\setminus B_1)\cup (A_2\setminus B_2)\in K
\end{equation}
Now let $x\in (A_1\cap A_2)\cap(B_1\cap B_2)^c = (A_1\cap A_2)\cap (B_1^c\cup B_2^c)=((A_1\cap A_2)\cap B_1^c)\cup ((A_1\cap A_2)\cap B_2^c)$ i.e. either $x\in (A_1\cap A_2)\cap B_1^c\subset A_1\cap B_1^c$ or $x\in ((A_1\cap A_2)\cap B_2^c)\subset A_2\cap B_2^c $ i.e. $x\in (A_1\cap B_1^c)\cup(A_2\cap B_2^c)$. So, $(A_1\cap A_2)\cap(B_1\cap B_2)^c\subset (A_1\cap B^c_1)\cup(A_2\cap B^c_2)\in K$
(from the equation (\ref{1})).
Thus $(A_1\cap A_2)\cap(B_1\cap B_2)^c\in K$ i.e. $(A_1\cap A_2)\setminus (f^{-1}(U)\cap f^{-1}(V))\in K$ i.e.
$(A_1\cup A_2)\setminus (f^{-1}(U\cup V))\in K$. Since $U\cap V= \phi$, then $f^{-1}(U\cap V)=\phi$ so $A_1\cap A_2\in K$ i.e.
\begin{equation}\label{6}
S\setminus (A_1\cap A_2)\in F(K)
\end{equation} 
Since $A_1, A_2\in F(I)$,
\begin{equation}\label{7}
A_1\cap A_2\in F(I)
\end{equation} 
Since $I\vee K$ is non-trivial so the dual filter is $F(I\vee K)=\{G\cap H:G\in F(I),H\in F(K)\}$.
Now using this from \ref{6} and \ref{7} we get $\phi\in F(I\vee K)$, which is a contradiction. Hence the $I^K$-limit is unique.
\end{proof}
\begin{theorem}
If $I$ and $K$ be two admissible ideal and if there exists an injective function $f:S\rightarrow E\subset X$ which is $I^K$-convergent to $x_0\in X$ then $x_0$ is a limit point of $E$
\end{theorem}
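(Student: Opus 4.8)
The plan is to verify the definition of a limit point directly: fix an arbitrary open neighbourhood $U$ of $x_0$ and exhibit a point of $E$ that lies in $U$ but is distinct from $x_0$. Since $f$ is $I^K$-convergent to $x_0$, there is a set $M\in F(I)$ for which the associated function $g$ (equal to $f$ on $M$ and to $x_0$ off $M$) is $K$-convergent to $x_0$. The first task is to translate this $K$-convergence into a statement about $f$ and $U$ alone.

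To do this I would compute $g^{-1}(X\setminus U)$. Because $x_0\in U$, every index $s\notin M$ satisfies $g(s)=x_0\in U$ and so contributes nothing; for $s\in M$ we have $g(s)=f(s)$, whence $g^{-1}(X\setminus U)=M\setminus f^{-1}(U)$. The $K$-convergence of $g$ to $x_0$ then says precisely that $M\setminus f^{-1}(U)\in K$. The point of $E$ I am hunting for must come from the complementary piece $M\cap f^{-1}(U)$, so the real content is to show that this set is infinite (equivalently, that it is not $K$-small).

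The key step, and the one I expect to be the main obstacle, is to argue that $M\cap f^{-1}(U)$ cannot be finite. Suppose it were. Since $K$ is admissible it contains every finite set, so $M\cap f^{-1}(U)\in K$; combined with $M\setminus f^{-1}(U)\in K$ and closure of $K$ under finite unions, this gives $M=(M\cap f^{-1}(U))\cup(M\setminus f^{-1}(U))\in K$. But $M\in F(I)$ means $S\setminus M\in I$, so $S=M\cup(S\setminus M)$ would lie in $I\vee K$, forcing $I\vee K$ to be trivial. This is exactly the degenerate situation that must be excluded: without it the statement actually fails, since taking $S=\mathbb N$ with $I=\{A:A\cap(\text{evens})\text{ finite}\}$ and $K=\{A:A\cap(\text{odds})\text{ finite}\}$ makes $g$ vacuously $K$-convergent (via $M=\text{evens}\in F(I)$) for \emph{any} injective $f$ and \emph{any} $x_0$. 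I would therefore add the hypothesis that $I\vee K$ be nontrivial, as in the preceding uniqueness theorem; under it, $M\cap f^{-1}(U)$ is infinite.

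Finally I would invoke injectivity. The set $\{f(s):s\in M\cap f^{-1}(U)\}$ is an infinite subset of $E\cap U$, and since $f$ is one-to-one at most one of these values equals $x_0$; hence $U$ contains a point of $E$ other than $x_0$. As $U$ was an arbitrary neighbourhood of $x_0$, this shows that $x_0$ is a limit point of $E$.
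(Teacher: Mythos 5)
Your proof is correct as amended, and your diagnosis pinpoints a genuine defect: the theorem as printed is false, and your even/odd example establishes this. Taking $I$ to be the ideal of subsets of $\mathbb N$ meeting the evens in a finite set, $K$ the ideal of subsets meeting the odds in a finite set (both admissible and nontrivial), and $M$ the set of evens, the function $g$ is constantly $x_0$ on the odds, so $g^{-1}(X\setminus U)\subseteq \{\mbox{evens}\}\in K$ for every neighbourhood $U$ of $x_0$; hence \emph{every} injective $f:\mathbb N\rightarrow \mathbb N\subset \mathbb R$ is $I^K$-convergent to, say, $x_0=1/2$, which is not a limit point of $\mathbb N$. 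So some hypothesis beyond admissibility is unavoidable, and nontriviality of $I\vee K$ is the natural one.

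The paper's own proof begins exactly as yours does but conceals the gap at the step you flagged: it notes $g^{-1}(U)\in F(K)$, hence $g^{-1}(U)\notin K$, hence $g^{-1}(U)$ is infinite since $K$ is admissible, and then simply says ``choose $k_0\in\{s:g(s)\in U\}$ such that $g(k_0)\neq x_0$.'' That choice need not be possible: $g^{-1}(U)=(S\setminus M)\cup\bigl(M\cap f^{-1}(U)\bigr)$, and the infinitude may be carried entirely by $S\setminus M$, where $g$ is constantly $x_0$; injectivity of $f$ cannot help there. Your argument differs in precisely this spot: you localize to $M\cap f^{-1}(U)$ and observe that its infinitude does not follow from the stated hypotheses, but does follow from nontriviality of $I\vee K$, via $M=\bigl(M\cap f^{-1}(U)\bigr)\cup\bigl(M\setminus f^{-1}(U)\bigr)\in K$ and then $S=M\cup(S\setminus M)\in I\vee K$. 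This added hypothesis is the same one the paper imposes in the uniqueness theorem immediately preceding this statement, so it is the natural repair; note also that your version delivers the stronger conclusion that $x_0$ is an $\omega$-accumulation point of $E$, which is the form of conclusion the paper itself wants later in its section on $I^K$-limit points.
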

\begin{proof}
The function $f$ has $I^K$-limit $x_0$, so $I^K$-limit there exists a set $M\in F(I)$ such that the function $g:S \rightarrow X$ given by  
\[g(s)=\left\{\begin {array}{ll}
        f(s) & \mbox{if $s\in M$} \\
		x_0 & \mbox{if $s\notin M$}
		\end{array}
		\right. \]
is $K$-convergent to $x_0$. Let $U$ be an arbitrary open set containing $x_0$. Then $g^{-1}(U)=\{s:g(s)\in U\}\in F(K)$. So $\{s:g(s)\in U\}\notin K$ i.e. $\{s:g(s)\in U\}$ is an infinite set, as $K$ is an admissible ideal. Choose $k_0\in \{s:g(s)\in U\}$ such that $g(k_0)\neq x_0$ then $g(k_0)\in U\cap (E\setminus \{x_0\})$. Thus $x_0$ is a limit point of $E$.
\end{proof}
\begin{theorem}
A Continuous function $h:X\rightarrow X$ preserves $I^K$-convergence.
\end{theorem}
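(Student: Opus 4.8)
The plan is to unwind the definition of $I^K$-convergence and let continuity transport everything through $h$. The statement should be read as: if $f:S\to X$ is $I^K$-convergent to $x$, then $h\circ f$ is $I^K$-convergent to $h(x)$. So suppose $f$ is $I^K$-convergent to $x$; by definition there is a set $M\in F(I)$ such that the auxiliary function $g:S\to X$, equal to $f$ on $M$ and constantly $x$ off $M$, is $K$-convergent to $x$. My strategy is to reuse the \emph{same} set $M$ for $h\circ f$ and to exploit the observation that the auxiliary function attached to $h\circ f$ and $M$ is precisely $h\circ g$.

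First I would write down the auxiliary function $g'$ associated to $h\circ f$ and $M$: it equals $h(f(s))$ for $s\in M$ and $h(x)$ for $s\notin M$. A one-line case check shows $g'(s)=h(g(s))$ for every $s\in S$ (on $M$ both sides are $h(f(s))$, off $M$ both sides are $h(x)$), so $g'=h\circ g$. This is the key reduction: proving that $h\circ f$ is $I^K$-convergent to $h(x)$ amounts to proving that $h\circ g$ is $K$-convergent to $h(x)$, given that $g$ is $K$-convergent to $x$.

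Next I would establish that continuity preserves ordinary $K$-convergence. Let $V$ be any neighborhood of $h(x)$. By continuity of $h$, the preimage $h^{-1}(V)$ is a neighborhood of $x$. Then $(h\circ g)^{-1}(V)=g^{-1}\bigl(h^{-1}(V)\bigr)$, and since $g$ is $K$-convergent to $x$ and $h^{-1}(V)$ is a neighborhood of $x$, this set lies in $F(K)$. As $V$ was arbitrary, $h\circ g$ is $K$-convergent to $h(x)$. Combining with the reduction above, the single set $M\in F(I)$ witnesses that $h\circ f$ is $I^K$-convergent to $h(x)$, which finishes the proof.

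I do not anticipate a genuine obstacle here: the whole content is the interplay of preimages with neighborhoods. The only point demanding care is the bookkeeping that the auxiliary function for $h\circ f$ is literally $h$ applied to the auxiliary function for $f$, since that identity is exactly what allows the same $M\in F(I)$ to serve both $f$ and $h\circ f$.
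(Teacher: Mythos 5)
Your proof is correct and takes essentially the same route as the paper: both reuse the same witness set $M\in F(I)$ and show that the auxiliary function attached to $h\circ f$ is $K$-convergent to $h(x)$ via continuity of $h$. The only cosmetic difference is that you identify that auxiliary function as $h\circ g$ and apply the preimage form of continuity directly, whereas the paper chooses an open $U\ni x$ with $h(U)\subset V$ and argues through the inclusion $M\setminus (h\circ f)^{-1}(V)\subset M\setminus f^{-1}(U)\in K$.
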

\begin{proof}
Let the function $f$ has $I^K$-limit $x$, so there exists a set $M\subset S\in F(I)$ such that the function $g:S \rightarrow X$ given by  
\[g(s)=\left\{\begin {array}{ll}
        f(s) & \mbox{if $s\in M$} \\
		x & \mbox{if $s\notin M$}
		\end{array}
		\right. \]
is $K$-convergent to $x$. Let $U$ be an arbitrary open set containing $x$. Then $g^{-1}(U)=S\setminus (M\setminus f^{-1}(U))\in F(K)$ i.e. $M\setminus f^{-1}(U)\in K$. So to prove the theorem we have to show that $I^K$-$\lim h(f(x))=h(x)$ i.e. it suffices to show that the function $g_1:S \rightarrow X$ given by
\[g_1(s)=\left\{\begin {array}{ll}
        (h\circ f)(s) & \mbox{if $s\in M$} \\
		h(x) & \mbox{if $s\notin M$}
		\end{array}
		\right. \]
is $K$-convergent to $h(x)$. Let $V$ be an open set containing $h(x)$. Since $h$ is continuous so there exists an open set $U$ containing $x$ such that $h(U)\subset V$. Clearly
$\{x: h(f(x))\notin V\}\subset \{x:f(x)\notin U\}$ which implies that $\{x:f(x)\in U\}\subset \{x:h\circ f(x)\in V\}$ i.e. $f^{-1}(U)\subset (h\circ f)^{-1}(V)$. So $M\setminus (h\circ f)^{-1}(V)\subset M\setminus f^{-1}(U)$. Then  $M\setminus (h\circ f)^{-1}(V)\in K$ as $M\setminus f^{-1}(U)\in K$. So its complement $g_1^{-1}(V)\in F(K)$, as required. Hence $I^K$-$\lim (h\circ f)(x)=h(x)$.
\end{proof}
\begin{theorem}
If $X$ is a discrete space then $I$-convergence implies $I^K$-convergence, where $I$ and $K$ are two admissible ideals.
\end{theorem}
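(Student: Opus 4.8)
The plan is to exploit the fact that in a discrete space every singleton is open, so that $I$-convergence to a point forces the function to actually attain that value on a set belonging to the filter $F(I)$. This will let me choose the witnessing set $M$ in the definition of $I^K$-convergence so cleverly that the modified function $g$ becomes constant, after which $K$-convergence is immediate.

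First I would suppose $f : S \rightarrow X$ satisfies $I$-$\lim f = x$. Since $X$ is discrete, the singleton $\{x\}$ is an open neighbourhood of $x$, so applying the definition of $I$-convergence to the neighbourhood $U = \{x\}$ yields $f^{-1}(\{x\}) = \{s \in S : f(s) = x\} \in F(I)$. I would then set $M := f^{-1}(\{x\})$, which lies in $F(I)$ exactly as required by the definition of $I^K$-convergence.

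Next I would form the associated function $g : S \rightarrow X$ defined by $g(s) = f(s)$ for $s \in M$ and $g(s) = x$ for $s \notin M$. The crucial observation is that on $M$ we have $f(s) = x$ by the very choice of $M$, and hence $g(s) = x$ for every $s \in S$; that is, $g$ is the constant function with value $x$.

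Finally I would verify that a constant function is $K$-convergent to its value for any ideal $K$: for an arbitrary neighbourhood $U$ of $x$ we have $g^{-1}(U) = S \in F(K)$, equivalently $g^{-1}(X \setminus U) = \phi \in K$. Thus $g$ is $K$-convergent to $x$, and by the definition of $I^K$-convergence we conclude $I^K$-$\lim f = x$. There is no genuine obstacle in this argument; the only point requiring care is the choice $M = f^{-1}(\{x\})$, which uses discreteness essentially (to make $\{x\}$ open) and forces $g$ to collapse to a constant, so that in fact the admissibility hypotheses on $I$ and $K$ are not even needed.
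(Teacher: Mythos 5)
Your proof is correct and takes essentially the same route as the paper's: both use discreteness to make $U=\{x\}$ open, take $M=f^{-1}(\{x\})\in F(I)$, and observe that the resulting function $g$ is constant (so $g^{-1}(U)=S\in F(K)$ for every neighbourhood $U$ of $x$), giving $K$-convergence trivially. Your closing remark that the admissibility hypotheses on $I$ and $K$ are never actually used is also accurate, as the paper's own argument relies only on $F(I)$ and $F(K)$ being filters.
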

\begin{proof}
Let $f:S \rightarrow X$ be a function such that $I$-$\lim f=x_0$. Since $X$ is a discrete space so it has no limit point then $U=\{x_0\}$ is open. Thus we have $f^{-1}(X\setminus U)=\{s\in S: f(s)\notin U\}\in I$. Let the set $M= f^{-1}(U)=\{s\in S: f(s)\in U\}\in F(I)$. Thus there exists a set $M=\{s:f(s)\in U\}=\{s:f(s)=x_0\}\in F(I)$ such that the function $g:S\rightarrow X$ defined by
\[g(s)=\left\{\begin {array}{ll}
        f(s) & \mbox{if $s\in M$} \\
		x_0 & \mbox{if $s\notin M$}
		\end{array}
		\right. \]
is $K$-convergent to $x_0$, since for any open set $U$ containing $x_0$  the set $g^{-1}(U)=S \in F(K)$. Hence $I^K$-$\lim f= x_0$
\end{proof}
\begin{note}
Converse of above theorem may not be true. Let $I$ and $K$ be two ideals on a set $S$. Consider a set $A\in K\setminus I$. Let $y_0\in X\setminus \{x_0\}$ be a fixed element and define a function $f:S\rightarrow X$ by
\[f(s)=\left\{\begin {array}{ll}
        x_0 & \mbox{if $s\in S\setminus A$} \\
		y_0 & \mbox{otherwise}
		\end{array}
		\right. \]
Now if $V$ is any open set containing $x_0$ then $f^{-1}(V)=S\setminus A$ if $y_0\notin V$ and $f^{-1}(V)=S$ if $y_0\in V$. So in both case $f^{-1}(V)\in F(K)$. Hence $K$-$\lim f=x_0$ then by lemma (2.1) we get $I^K$-$\lim f=x_0$. But $U=\{x_0\}$ is an open set containing $x_0$ since $X$ is a discrete space and $f^{-1}(X\setminus U) = A \notin I$. Hence $f$ is not $I$-convergent to $x_0$.
\end{note}
\begin{theorem}
Let $(X,\tau)$ be a topological space and let $f:S\rightarrow X$ be a function, where $S$ is a non-empty set, such that $x\in X$ is an $I^K$-limit of the function $f$, for some non-trivial ideals $I$ and $K$ of $S$. Then there exists a filter $F$ on $X$ such that $x$ is also a limit of the filter $F$.
\end{theorem}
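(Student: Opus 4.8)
The plan is to manufacture the required filter directly from the auxiliary function that the definition of $I^K$-convergence provides, by taking its direct image of the dual filter $F(K)$.

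First I would unpack the hypothesis: since $x$ is an $I^K$-limit of $f$, there is a set $M\in F(I)$ for which the function $g:S\to X$ defined by $g(s)=f(s)$ on $M$ and $g(s)=x$ off $M$ is $K$-convergent to $x$; equivalently, $g^{-1}(U)\in F(K)$ for every neighborhood $U$ of $x$. The key idea is then to set
$$F=\{\,B\subseteq X:\ g^{-1}(B)\in F(K)\,\},$$
the push-forward of $F(K)$ along $g$, and to show that this $F$ is the desired filter.

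Next I would verify that $F$ is a proper filter on $X$. That $X\in F$ and that $F$ is closed under finite intersections and supersets follow routinely from $g^{-1}(X)=S\in F(K)$, from $g^{-1}(B_1\cap B_2)=g^{-1}(B_1)\cap g^{-1}(B_2)$, and from the monotonicity of preimages, using the corresponding properties of the filter $F(K)$. Properness ($\emptyset\notin F$) is where the non-triviality of $K$ enters: since $S\notin K$ we have $\emptyset=g^{-1}(\emptyset)\notin F(K)$, so $\emptyset\notin F$.

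Finally I would observe that $x$ is a limit of $F$. This amounts to showing that every neighborhood $U$ of $x$ lies in $F$, which is immediate from the displayed definition together with the $K$-convergence of $g$: the relation $g^{-1}(U)\in F(K)$ says exactly that $U\in F$. Thus the neighborhood filter of $x$ is contained in $F$, so $F$ converges to $x$. I expect no serious obstacle here; the only point demanding care is the properness of $F$, which relies essentially on $K$ being non-trivial (equivalently, on $F(K)$ being a proper filter on $S$), since otherwise the putative filter could contain $\emptyset$ and the conclusion would be vacuous.
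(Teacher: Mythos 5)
Your proposal is correct and is essentially the paper's own argument: both push the dual filter $F(K)$ forward along the auxiliary function $g$, the paper by generating a filter from the base $\{g(M): M\in F(K)\}$ of direct images, you by declaring $B\in F$ iff $g^{-1}(B)\in F(K)$, and these two descriptions yield the identical filter (if $B\supseteq g(M)$ with $M\in F(K)$ then $g^{-1}(B)\supseteq M$, and conversely $g(g^{-1}(B))\subseteq B$ with $g^{-1}(B)\in F(K)$). Your preimage formulation is a slightly cleaner packaging of the same idea, and your attention to properness of $F$ via the non-triviality of $K$ matches the paper's use of $\emptyset\notin F(K)$ to ensure each $g(M)$ is non-empty.
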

\begin{proof}
Let $I$ $\&$ $K$ be two non-trivial ideals on non-empty set $S$. Also let $x$ is $I^K$-limit of the function $f:S\rightarrow X$. Then from the definition of $I^K$-convergence then there exists a set $M_1\in F(I)$ such that the function $g:S \rightarrow X$ given by  
\[g(s)=\left\{\begin {array}{ll}
        f(s) & \mbox{if $s\in M_1$} \\
		x & \mbox{if $s\notin M_1$}
		\end{array}
		\right. \]
is $K$-convergent to $x$.
So for every open set $U$ containing $x$, the set
\begin{equation}\label{4}
M=g^{-1}(U)=\{s\in S:g(s)\in U\}\in F(K)
\end{equation}
Let us construct for each $M\in F(K)$ the set $A_M=\{g(n):n\in M\}$ and $\mathcal{B}=\{A_M:M\in F(K)\}$. Then the family $\mathcal{B}$ forms a filter base on $X$. In fact, (i)We observe that each $A_M$ is non-empty. Since $M$ is non-empty so $\mathcal{B}$ is non-empty. (ii)Since $F(K)$ is filter, $\phi \notin F(K)$ and so $A_M\neq \phi$ for all $M\in F(K)$ and $\phi \notin \mathcal{B}$. (iii) Let us take any two members $A_M,A_R\in$ where $M,R\in F(K)$. $M\cap R\in F(K)$ since $F(K)$ is filter on $S$. So $A_{M\cap R}\in \mathcal{B}$. Also $A_{M\cap R}\subset A_M\cap A_R$. So $\mathcal{B}$ is a filter base. Let $F$ be the filter generated by this filter base. Now we will show that $x$ be the limit of filter $F$. Let $V$ be any open set of $x$. Then from (\ref{4}) the set $ M=\{s\in S:g(s)\in V\}\in F(K)$. So by our construction of $A_M$, we get $A_M=\{g(n):n\in M\}\subset V$. Since $A_M\in \mathcal{B}$ we get $V\in F$. So we conclude that $V\in F$ for all open set $V$ of $x$. Hence $x$ becomes limit of the filter $F$.
\end{proof}
\begin{theorem}
Let $(X,\tau)$ be a topological space and $x\in X$. Then for every function $f:S\rightarrow X$  there exists a filter $F$ on $X$ such that if $x$ is limit of filter $F$ then $x$ is also $I^K$-limit of the function $f$.
\end{theorem}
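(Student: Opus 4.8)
The plan is to prove this as a converse to the preceding theorem by exhibiting an explicit filter built directly from $f$, chosen coarse enough that its convergence to $x$ already forces $K$-convergence of $f$, from which $I^K$-convergence follows immediately by Lemma \ref{0}. Concretely, I would set
$$\mathcal{B} = \{\, f(M) : M \in F(K) \,\}, \qquad \text{where } f(M) = \{\, f(s) : s \in M \,\},$$
and take $F$ to be the filter on $X$ generated by the family $\mathcal{B}$. Note that this $F$ depends only on $f$ and $K$, which is exactly what the statement permits (``for every function $f$ there exists a filter $F$'').

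First I would verify that $\mathcal{B}$ is a filter base on $X$, mirroring the verification in the previous theorem but with $f$ in place of the modified function $g$. Each $M \in F(K)$ is non-empty, since $F(K)$ is a proper filter, hence $f(M) \neq \emptyset$ and $\emptyset \notin \mathcal{B}$; and for any $M, R \in F(K)$ we have $M \cap R \in F(K)$ together with $f(M \cap R) \subseteq f(M) \cap f(R)$, so the intersection of two members of $\mathcal{B}$ contains a member of $\mathcal{B}$. Thus $\mathcal{B}$ generates a genuine proper filter $F$ on $X$.

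Then comes the main step. Suppose $x$ is a limit of $F$. Then every open set $U$ containing $x$ lies in $F$, so by the definition of the generated filter there is some $M \in F(K)$ with $f(M) \subseteq U$, that is, $M \subseteq f^{-1}(U)$. Since $F(K)$ is upward closed, this yields $f^{-1}(U) \in F(K)$ for every neighborhood $U$ of $x$, which is precisely the assertion $K$-$\lim f = x$ (Definition 2.3 read with $K$ in place of $I$). Finally, applying Lemma \ref{0} gives $I^K$-$\lim f = x$, as required.

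I do not anticipate a serious obstacle here; the one genuine design choice, and the real crux of the argument, is to build the filter from the ideal $K$ rather than from $I$ or from $I \vee K$. Had I used $F(I)$, convergence of the image filter would only yield $I$-$\lim f = x$, which does not imply $I^K$-convergence in general (it does so only when $K \subseteq I$, by the earlier theorem). Routing the argument through $K$-convergence is what lets Lemma \ref{0} apply unconditionally. The only point that will require care is handling the containment $M \subseteq f^{-1}(U)$ together with the upward closure of $F(K)$ in the correct direction to recover the membership $f^{-1}(U) \in F(K)$.
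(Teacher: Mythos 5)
Your proposal is correct and is essentially the paper's own proof: the paper builds the same filter base $\{A_M : M \in F(K)\}$ with $A_M = \{f(n) : n \in M\}$, deduces from $U \in F$ that some $M \in F(K)$ satisfies $M \subseteq f^{-1}(U)$, invokes upward closure of $F(K)$ to get $K$-$\lim f = x$, and finishes with Lemma \ref{0}. Your added remark on why the base must come from $F(K)$ rather than $F(I)$ is a nice clarification but does not change the argument.
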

\begin{proof}
Let $f:S\rightarrow X$ be a function and $I$, $K$ be two non-trivial ideals of $S$. For each $M\in F(K)$ let $A_M=\{f(n):n\in M\}$ and $\mathcal{B}=\{A_M:M\in F(K)\}$. Then the family $\mathcal{B}$ forms a filter base on $X$. Let $F$ be the filter generated by this filter base. Let $x$ be the limit of filter $F$. Then $\eta_x\subset F$ where $\eta_x$ is the neighborhood filter  of the point $x$. Let $U\in \eta_x$ be arbitrary. Then $U\in F$ and so $A_M\subset U$ for some $M\in F(K)$. This implies that $M\subset \{n\in S:f(n)\in U\}$  which further implies that $\{n\in S:f(n)\in U\}\in F(K)$ since $M\in F(K)$. Now $U$ is arbitrary so the function $f$ is $K$-convergent to $x$. Hence from the lemma (\ref{0}) we get $f$ is $I^K$-convergent to $x$. 
\end{proof}
\section{$I^{K^*}$-Convergence in Topological Spaces}
$I^{K^*}$-convergence is also a common generalization of all types of $I^*$ and  $K^*$-convergence. It is interesting to find the relation between $I, I^*,K^*,I^{K^*}$ and $I^K$-convergence.
\begin{definition}
Let $X$ be a topological space and $x\in X$ and let $I$, $K$ be two ideals on a non-void set $S$. A function $f:S\rightarrow X$  is called $I^{K^*}$-convergent to the point $x$ if there exists a set $M\in F(I)$ and $M_1 \in F(K)$ such that the function $g:S\rightarrow X$ given by
\[g(s)=\left\{\begin {array}{ll}
        f(s) & \mbox{if $s\in M\cap M_1$} \\
		x & \mbox{if $s\notin M\cap M_1$}
		\end{array}
		\right. \]
is Fin$(S)$-convergent to $x$. 
\end{definition}
If $f$ is $I^{K^*}$-convergent to $x$ then we write $I^{K^*}$-$\lim f=x$.
\begin{note} It follows from the definition that 
$f$ is $I^{K^*}$-convergent to $x$ if and only if there exist a set $M\in F(I)$ such that the function $g:S\rightarrow X$ given by
\[g(s)=\left\{\begin {array}{ll}
        f(s) & \mbox{if $s\in M$} \\
		x & \mbox{if $s\notin M$}
		\end{array}
		\right. \]
is $K^*$-convergent to $x$. 
\end{note}
\begin{lemma}
If $I$ and $K$ are two ideals on a set $S$ and if $f:S\rightarrow X$ is a function such that $K^*$-$\lim f=x$ then $I^{K^*}$-$\lim f=x$.
\end{lemma}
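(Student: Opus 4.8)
The plan is to exploit the fact that the whole set $S$ always belongs to the dual filter $F(I)$, so that choosing $M=S$ in the definition of $I^{K^*}$-convergence collapses the problem onto the hypothesis. First I would record the only structural fact about $I$ that enters the argument: since $\emptyset\in I$ for every ideal, we have $S\setminus S=\emptyset\in I$ and hence $S\in F(I)$. The ideal $I$ plays no further role.

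Next I would invoke the hypothesis $K^*$-$\lim f=x$. By the definition of $K^*$-convergence (the analogue of the definition of $I^*$-convergence with $K$ in place of $I$) there is a set $M_1\in F(K)$ such that the truncated function agreeing with $f$ on $M_1$ and with $x$ off $M_1$ is $\mathrm{Fin}(S)$-convergent to $x$. Taking $M=S\in F(I)$ together with this same $M_1\in F(K)$, I would observe that $M\cap M_1=S\cap M_1=M_1$, so the function $g$ built from the pair $(M,M_1)$ in the definition of $I^{K^*}$-convergence is literally the $K^*$-witness, which is already $\mathrm{Fin}(S)$-convergent to $x$. Hence $f$ is $I^{K^*}$-convergent to $x$.

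Alternatively, and perhaps more transparently, I would use the Note following the definition of $I^{K^*}$-convergence, which characterizes it as the existence of a set $M\in F(I)$ for which the associated function $g$ is $K^*$-convergent to $x$. Choosing $M=S$ makes $g$ coincide with $f$ on all of $S$, so $g=f$ is $K^*$-convergent to $x$ by assumption, and the Note yields the conclusion at once.

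There is essentially no obstacle here: the statement is the starred analogue of Lemma \ref{0}, and its content reduces to the single observation $S\in F(I)$. The only point deserving a moment of care is to confirm that the two descriptions of $I^{K^*}$-convergence (the definition and the subsequent Note) agree, so that either route is legitimate; this is immediate from the definitions and requires no separate verification.
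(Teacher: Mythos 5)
Your proof is correct and takes essentially the same route as the paper: the paper's one-line proof (``Follows from the lemma \ref{0}'') rests on exactly the observation you make explicit, namely that $S\in F(I)$, so choosing $M=S$ collapses the $I^{K^*}$ witness to the given $K^*$ witness. Your write-up merely spells out, via the definition and the Note, the details the paper leaves implicit in that citation.
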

\begin{proof}
Follows from the lemma \ref{0}.
\end{proof}
\begin{lemma}\label{2}
If $I$ and $K$ be two admissible ideals on a set $S$ and $f:S\rightarrow X$ is a function such that $I^{K^*}$-$\lim f=x$ then $I^K$-$\lim f=x$.
\end{lemma}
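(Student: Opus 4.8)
The plan is to reduce the claim to the well-known fact, recalled in the introduction, that for an admissible ideal $K^*$-convergence implies $K$-convergence; everything else is bookkeeping with the witnessing sets. First I would apply the remark immediately following the definition of $I^{K^*}$-convergence: since $I^{K^*}\text{-}\lim f = x$, there is a set $M \in F(I)$ for which the auxiliary function $g$, given by $g(s)=f(s)$ for $s\in M$ and $g(s)=x$ for $s\notin M$, is $K^*$-convergent to $x$. This single $M\in F(I)$ is the set I will carry unchanged to the very end.

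The substance of the argument is to upgrade the $K^*$-convergence of $g$ to $K$-convergence of $g$. By definition of $K^*$-convergence there is $M_1\in F(K)$ such that the function $h$, equal to $g$ on $M_1$ and to $x$ off $M_1$, is $\mathrm{Fin}(S)$-convergent to $x$. Fixing an arbitrary open $U\ni x$, I would split the set $\{s : g(s)\notin U\}$ along $M_1$: on $M_1$ the functions $g$ and $h$ agree, so $\{s\in M_1 : g(s)\notin U\}\subseteq\{s : h(s)\notin U\}$, which is finite because $h$ is $\mathrm{Fin}(S)$-convergent; the complementary part is contained in $S\setminus M_1$, which lies in $K$ as $M_1\in F(K)$. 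Since $K$ is admissible it absorbs the finite set, so the union, and hence $\{s : g(s)\notin U\}$, belongs to $K$. As $U$ was arbitrary, $g$ is $K$-convergent to $x$.

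Finally, the definition of $I^K$-convergence asks precisely for a set in $F(I)$ whose associated step function $g$ is $K$-convergent to $x$; the set $M$ produced in the first step now does exactly this, so $I^K\text{-}\lim f = x$, as required.

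I do not expect a genuine obstacle: the only real ingredient is the admissibility of $K$, which is what turns finiteness of $\{s : h(s)\notin U\}$ into membership in $K$. The one point demanding care is keeping the two witnessing sets distinct and correctly placed --- the outer $M\in F(I)$ coming from the $I^{K^*}$/$I^K$ layer and the inner $M_1\in F(K)$ coming from the $K^*$ layer --- and ensuring that it is the same $M$, untouched, that validates the final $I^K$ step. It is worth remarking that admissibility of $I$ is never actually invoked; only admissibility of $K$ is used.
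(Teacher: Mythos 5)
Your proof is correct and takes essentially the same route as the paper: both invoke Note 4.1 to extract a single $M\in F(I)$ whose associated function $g$ is $K^*$-convergent to $x$, and then pass from $K^*$-convergence to $K$-convergence of $g$, which is exactly what the definition of $I^K$-convergence requires. The only difference is that the paper cites the implication ``$K^*$-convergence implies $K$-convergence'' as a known fact, whereas you prove it explicitly via the split along $M_1\in F(K)$ (and you correctly observe that only the admissibility of $K$, not of $I$, is ever used).
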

\begin{proof}
The proof follows from the note $(4.1)$ and since $K^*$-convergence implies $K$-convergence of the function $g$.
\end{proof}
\begin{theorem}
If $X$ is a discrete space then $I^K$ and $I^{K^*}$-convergence coincide for every admissible ideal $I$ and $K$.
\end{theorem}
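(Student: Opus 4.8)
The plan is to establish the stated coincidence by treating the two implications separately. One direction is already in hand: by Lemma \ref{2}, whenever $I$ and $K$ are admissible we have that $I^{K^*}$-$\lim f = x$ implies $I^{K}$-$\lim f = x$, and this holds in any topological space, so in particular in the discrete one. Hence the entire content of the theorem lies in the reverse implication, namely that on a discrete $X$ the relation $I^{K}$-$\lim f = x$ forces $I^{K^*}$-$\lim f = x$. This reverse implication is the step I expect to be the main obstacle, since a priori the witness supplied by $I^{K}$-convergence is only a set lying in $K$, whereas $I^{K^*}$-convergence demands a witness in the filter $F(K)$ together with a genuinely finite exceptional set.

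To attack the reverse implication I would first reduce it using Note $(4.1)$: the function $f$ is $I^{K^*}$-convergent to $x$ exactly when there is a set $M \in F(I)$ for which the associated function $g$ (equal to $f$ on $M$ and to $x$ off $M$) is $K^*$-convergent to $x$. Since $I^{K}$-convergence already provides such an $M \in F(I)$ with $g$ being $K$-convergent to $x$, it suffices to prove, for this particular $g$ on a discrete space, that $K$-convergence to $x$ implies $K^*$-convergence to $x$. In effect I am isolating the cleaner statement that $K$- and $K^*$-convergence coincide on discrete spaces, which is the discrete analogue of the classical coincidence of $I$ and $I^*$ under the AP-condition.

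For that cleaner statement I would use discreteness directly. Because $\{x\}$ is open, $K$-convergence of $g$ to $x$ is equivalent to $g^{-1}(\{x\}) \in F(K)$, i.e. to $A := \{s \in S : g(s) \neq x\} \in K$. I would then set $M_1 := S \setminus A$, which lies in $F(K)$ precisely because $A \in K$, and form the function $h$ equal to $g$ on $M_1$ and to $x$ elsewhere. The set of points at which $h$ differs from $x$ is $\{s \in M_1 : g(s) \neq x\} = A \cap (S \setminus A) = \phi$, so $h$ is trivially $\mathrm{Fin}(S)$-convergent to $x$; thus $g$ is $K^*$-convergent to $x$.

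Chaining these together: the set $M \in F(I)$ coming from $I^{K}$-convergence, together with the set $M_1 \in F(K)$ just constructed, witnesses $I^{K^*}$-convergence of $f$ to $x$ through Note $(4.1)$. The only delicate points are the bookkeeping ones, namely checking that $S \setminus A \in F(K)$ and that the exceptional set is genuinely finite (here even empty), and noting that admissibility of $K$ is precisely what is consumed in the opposite direction via Lemma \ref{2}. With both implications established, $I^{K}$- and $I^{K^*}$-convergence coincide on a discrete space for admissible $I$ and $K$.
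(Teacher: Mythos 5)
Your proposal is correct and matches the paper's own argument: one direction via Lemma \ref{2}, and for the converse both you and the paper use discreteness of $X$ to get $M_1=\{s\in S: g(s)=x\}\in F(K)$ and observe that the resulting modified function is identically $x$, hence Fin$(S)$-convergent. If anything, your write-up is slightly cleaner in isolating the claim that $K$- and $K^*$-convergence coincide on discrete spaces and in chaining the witnesses through Note $(4.1)$, where the paper's wording is a bit loose.
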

\begin{proof}
Let $X$ be a discrete topological space then it has no limit point and $x\in X$. Let $I$ and $K$ be two admissible ideals on a set $S$ and $f:S\rightarrow X$ is a function such that $I^K$-$\lim f=x$. Because of previous lemma (\ref{2}) we have only to show that $I^{K^*}$-$\lim f=x$. Now from the definition of $I^K$-convergence there exists a set $M\in F(I)$ such that the function $g:S \rightarrow X$ defined by
\[g(s)=\left\{\begin {array}{ll}
        f(s) & \mbox{if $s\in M$} \\
		x & \mbox{if $s\notin M$}
		\end{array}
		\right. \]
 is $K$-convergent to $x$ i.e. $K$-$\lim g(x)=x$. Since $X$ has no limit point so $U=\{x\}$ is open. So we have $\{s:g(s)\notin U\}\in K$. Hence the set $M_1=\{s: g(s)\in U\}=\{s:g(s)=x\}\in F(K)$. So there exist $M_1\in F(I)$ such that the function $g_1:S\rightarrow X$ defined by
\[g_1(s)=\left\{\begin {array}{ll}
        f(s) & \mbox{if $s\in M_1$} \\
		x & \mbox{if $s\notin M_1$}
		\end{array}
		\right. \]
is Fin$(S)$-convergent to $x$, since for any open set $U$ containing $x$, $g^{-1}(X\setminus U)=\phi$ is a finite set. 
 Thus $K^*$-$\lim g(x)=x$. So $I^{K^*}$-$\lim f=x$. 
\end{proof}
\begin{theorem}
Let $I$ and $K$ be two admissible ideals on a non-empty set $S$ and let $f:S\rightarrow X$ be a function where $X$ is a topological space. Then $I^{K^*}$-convergence implies $I$-convergence if $K\subseteq I$.
\end{theorem}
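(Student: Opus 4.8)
The plan is to unwind the definition of $I^{K^*}$-convergence and then, for an arbitrary neighborhood $U$ of $x$, to exhibit the ``bad'' set $f^{-1}(X\setminus U)$ as a union of a finite set and a set belonging to $I$. So I would begin by assuming $I^{K^*}$-$\lim f=x$ together with $K\subseteq I$. By definition there exist $M\in F(I)$ and $M_1\in F(K)$ such that the function $g$ which equals $f$ on $M\cap M_1$ and equals $x$ off $M\cap M_1$ is $\mathrm{Fin}(S)$-convergent to $x$. Fixing any neighborhood $U$ of $x$, the $\mathrm{Fin}(S)$-convergence of $g$ gives that $g^{-1}(X\setminus U)$ is a finite set.

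First I would decompose $f^{-1}(X\setminus U)$ according to membership in $M\cap M_1$, using the partition $S=(M\cap M_1)\sqcup(S\setminus(M\cap M_1))$. On $M\cap M_1$ we have $g=f$, so $\{s\in M\cap M_1:f(s)\notin U\}=\{s\in M\cap M_1:g(s)\notin U\}\subseteq g^{-1}(X\setminus U)$, which is finite; since $I$ is admissible it contains every finite subset of $S$, so this piece lies in $I$. Off $M\cap M_1$ I would simply use the containment $\{s\notin M\cap M_1:f(s)\notin U\}\subseteq S\setminus(M\cap M_1)=(S\setminus M)\cup(S\setminus M_1)$, noting that here $g(s)=x\in U$, so these points cannot be controlled through $g$ and must instead be absorbed by the filter sets.

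The key step is to place this second piece inside $I$, and this is exactly where the hypothesis $K\subseteq I$ enters. Indeed $S\setminus M\in I$ because $M\in F(I)$, while $S\setminus M_1\in K\subseteq I$ because $M_1\in F(K)$; as $I$ is closed under finite unions, $(S\setminus M)\cup(S\setminus M_1)\in I$, and hence the off-diagonal piece, being a subset, also lies in $I$. Combining the two pieces, $f^{-1}(X\setminus U)$ is a union of two members of $I$ and therefore belongs to $I$. Since $U$ was an arbitrary neighborhood of $x$, this establishes $f^{-1}(X\setminus U)\in I$ for every such $U$, that is, $I$-$\lim f=x$.

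I do not expect a genuine obstacle here: the argument is essentially set-theoretic bookkeeping of the above partition. The only two points that require care are (a) verifying that admissibility of $I$ is what annihilates the finite part, and (b) checking that $K\subseteq I$ is precisely the hypothesis needed to convert the $K$-filter set $M_1$ into an $I$-statement, since without $K\subseteq I$ the set $S\setminus M_1$ need not lie in $I$ and the off-diagonal estimate would fail.
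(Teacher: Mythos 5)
Your proof is correct and follows essentially the same route as the paper: both decompose $f^{-1}(X\setminus U)$ into the finite piece inside $M\cap M_1$ (absorbed into $I$ by admissibility) and a piece contained in $S\setminus(M\cap M_1)$ (placed in $I$ via $K\subseteq I$). The only cosmetic difference is that the paper passes through $F(K)\subseteq F(I)$ to get $M\cap M_1\in F(I)$, whereas you write $S\setminus(M\cap M_1)=(S\setminus M)\cup(S\setminus M_1)$ and handle the two complements directly; the set-theoretic content is identical.
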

\begin{proof}
Suppose that the function $f: S\rightarrow X$ is $I^{K^*}$-convergent to $x\in X$. So there exists sets $M\in F(I)$ and $M_1\in F(K)$ such that the function $g:S\rightarrow X$ given by
\[g(s)=\left\{\begin {array}{ll}
        f(s) & \mbox{if $s\in M\cap M_1$} \\
		x & \mbox{if $s\notin M\cap M_1$}
		\end{array}
		\right. \]
is Fin$(S)$-convergent to $x$ i.e. $g^{-1}(X\setminus U)=\{s\in S:g(s)\notin U\}$ is a finite set for each open set $U$ containing the point $x$. Now the set $C$ (say)$=f^{-1}(X\setminus U)\cap(M\cap M_1)\subset g^{-1}(X\setminus U)$ i.e. $C$ is finite. So $C\in I$. Now,
\begin{equation}\label{3}
f^{-1}(X\setminus U)\subseteq (S\setminus (M\cap M_1))\cup C
\end{equation} 
and $F(K)\subset F(I)$, since $K\subseteq I$ . Therefore $M\cap M_1\in F(I)$. So $S\setminus (M\cap M_1)\in I$. So from (\ref{3}) we get $f^{-1}(X\setminus U)\in I$. Therefore $f$ is $I$-convergent to $x$. i.e.$I$-$\lim f= x$
\end{proof}
\begin{lemma}
If $I$ and $K$ be two admissible ideals on a set $S$ and $f$ be a function from $S$ to $X$, where $X$ be a topological space. Then $I^{K^*}$-convergence implies $K$-convergence if $I\subseteq K$.
\end{lemma}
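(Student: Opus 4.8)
The plan is to follow the scheme of the preceding theorem but with the roles of $I$ and $K$ interchanged, so that the containment of dual filters runs in the opposite direction. First I would unpack the hypothesis: since $f$ is $I^{K^*}$-convergent to $x$, there exist $M\in F(I)$ and $M_1\in F(K)$ such that the associated function $g$, equal to $f$ on $M\cap M_1$ and equal to $x$ off $M\cap M_1$, is $\mathrm{Fin}(S)$-convergent to $x$. Fixing an arbitrary open set $U$ containing $x$, this says exactly that $g^{-1}(X\setminus U)=\{s\in S:g(s)\notin U\}$ is a finite set.

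Next I would bound $f^{-1}(X\setminus U)$ in terms of that finite set. On $M\cap M_1$ the functions $f$ and $g$ coincide, so $C:=f^{-1}(X\setminus U)\cap(M\cap M_1)$ is contained in $g^{-1}(X\setminus U)$ and is therefore finite; since $K$ is admissible it contains every finite set, hence $C\in K$. Splitting a point of $f^{-1}(X\setminus U)$ according to whether or not it lies in $M\cap M_1$ yields the inclusion
\[
f^{-1}(X\setminus U)\subseteq \big(S\setminus(M\cap M_1)\big)\cup C.
\]

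The one place where the hypothesis $I\subseteq K$ enters, and what I expect to be the main point to get right, is showing $S\setminus(M\cap M_1)\in K$. The inclusion $I\subseteq K$ gives the \emph{reversed} containment of dual filters $F(I)\subseteq F(K)$; consequently $M\in F(I)\subseteq F(K)$ and $M_1\in F(K)$, so $M\cap M_1\in F(K)$ because a filter is closed under finite intersection. Equivalently $S\setminus(M\cap M_1)\in K$. This filter-direction bookkeeping is the crux, since it is precisely the step that distinguishes this lemma from the $K\subseteq I$ version proved above, where one instead used $F(K)\subseteq F(I)$.

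Finally I would assemble the pieces. The right-hand side of the displayed inclusion is a union of two members of $K$, hence lies in $K$, and since an ideal is hereditary with respect to inclusion, $f^{-1}(X\setminus U)\in K$. As $U$ was an arbitrary open neighbourhood of $x$, this is exactly the statement that $f$ is $K$-convergent to $x$, completing the argument.
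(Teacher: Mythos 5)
Your proof is correct and is precisely the argument the paper intends: the paper omits the proof of this lemma, saying only that it is ``similar to the proof of Theorem 4.2,'' and your write-up is exactly that proof with the roles of $I$ and $K$ exchanged, using $M\cap M_1\in F(K)$ (from $I\subseteq K$) in place of $M\cap M_1\in F(I)$, and concluding $f^{-1}(X\setminus U)\in K$ instead of $\in I$. One minor wording caveat: the dual-filter containment $F(I)\subseteq F(K)$ runs in the \emph{same} direction as $I\subseteq K$ (not ``reversed''); it is only opposite to the containment used in Theorem 4.2, and your displayed mathematics applies it correctly.
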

\begin{proof}
The proof is similar to the proof of Theorem $(4.2)$ and so omitted.
\end{proof}
\begin{theorem}
$I^*$-convergence implies $I^{K^*}$-convergence.
\end{theorem}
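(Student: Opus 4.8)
The plan is to unwind the two definitions and observe that the extra freedom in the definition of $I^{K^*}$-convergence can be trivialized by a suitable choice of the filter set coming from $K$. Suppose that $f$ is $I^*$-convergent to $x$. By Definition 2.4 this supplies a single set $M\in F(I)$ for which the associated function $g$, equal to $f$ on $M$ and to $x$ off $M$, is $\mathrm{Fin}(S)$-convergent to $x$. The goal, by Definition 4.1, is to produce a set in $F(I)$ together with some $M_1\in F(K)$ so that the function built from the intersection $M\cap M_1$ is $\mathrm{Fin}(S)$-convergent to $x$.

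First I would record the elementary fact that $S\in F(K)$ for every ideal $K$ on $S$: indeed $\phi\in K$ because $K$ is an ideal, and hence $S=S\setminus\phi$ lies in the dual filter $F(K)$. This is the only structural input needed. With this in hand, I would reuse the very set $M$ provided by $I^*$-convergence and simply set $M_1=S$. Then $M\cap M_1=M$, so the piecewise function prescribed in Definition 4.1 coincides verbatim with the function $g$ already known to be $\mathrm{Fin}(S)$-convergent to $x$. This directly verifies the defining condition for $I^{K^*}$-convergence, and the argument is complete.

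There is essentially no obstacle here: the statement reflects the fact that $I^{K^*}$-convergence only ever relaxes $I^*$-convergence by allowing an additional intersection with a $K$-filter set, and taking that set to be all of $S$ recovers $I^*$-convergence on the nose. One could equivalently phrase the argument through Note 4.1, observing that any $\mathrm{Fin}(S)$-convergent function is a fortiori $K^*$-convergent (again by choosing the witnessing $K$-set to be $S$), so that the same $M\in F(I)$ witnesses $I^{K^*}$-convergence; but the direct choice $M_1=S$ is the most economical route and avoids invoking the auxiliary characterization.
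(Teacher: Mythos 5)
Your proof is correct and takes essentially the same approach as the paper: both arguments reduce $I^{K^*}$-convergence to the Fin$(S)$-convergence of $g$ already supplied by $I^*$-convergence, by trivializing the role of $K$. The paper phrases this through Note 4.1 together with the observation that Fin$(S)$-convergence implies $K^*$-convergence, which is exactly your explicit choice $M_1=S$ written in a different order.
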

\begin{proof}
Let $I$ and $K$ be two ideals on a non-void set $S$ and $f:S\rightarrow X$ be a function such that $f$ is $I^*$-convergence to $x$ of $X$. So $\exists$ a set $M\in F(I)$ such that the function $g:S\rightarrow X$ defined by
\[g(s)=\left\{\begin {array}{ll}
        f(s) & \mbox{if $s\in M$} \\
		x & \mbox{if $s\notin M$}
		\end{array}
		\right. \]
is Fin$(S)$-convergent to $x$.
Since Fin-convergent always implies $K^*$-convergent then the function $g$ is $K^*$-convergent to $x$. and so $f$ is $I^{K^*}$-convergent to $x$ by the Note$(4.1)$.
\end{proof}
\begin{lemma}
$K^*$-convergence implies $I^{K^*}$-convergence.
\end{lemma}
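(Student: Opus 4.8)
The plan is to prove the implication directly from the definitions by exhibiting the two witnessing sets. I would start by assuming $K^*$-$\lim f = x$, so that by definition there exists $M_1 \in F(K)$ for which the function $h:S \to X$ equal to $f(s)$ on $M_1$ and to $x$ on $S \setminus M_1$ is Fin$(S)$-convergent to $x$. The task is then to supply sets $M \in F(I)$ and $M_1 \in F(K)$ meeting the definition of $I^{K^*}$-convergence.

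The key observation I would use is that $S \in F(I)$ for any nontrivial ideal $I$: indeed $S \notin I$ and $S \setminus S = \phi \in I$, so $S \in F(I)$ by the definition of the dual filter. I would then take $M = S$ together with the set $M_1 \in F(K)$ produced above. Since $M \cap M_1 = S \cap M_1 = M_1$, the auxiliary function in the definition of $I^{K^*}$-convergence---the one equal to $f(s)$ on $M \cap M_1$ and to $x$ elsewhere---is literally the function $h$, which is already Fin$(S)$-convergent to $x$. This verifies the definition, giving $I^{K^*}$-$\lim f = x$.

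A shorter route, which I would mention as an alternative, runs through Note $(4.1)$: that note reduces $I^{K^*}$-convergence to the existence of a set $M \in F(I)$ for which the corresponding restricted function is $K^*$-convergent to $x$. Choosing $M = S$ makes this restricted function coincide with $f$, which is $K^*$-convergent by hypothesis. In this form the statement is exactly the starred analogue of Lemma $2.1$, obtained by replacing $K$ by $K^*$ and $I^K$ by $I^{K^*}$ throughout.

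I do not expect a real obstacle here. The only step needing any care is the claim $S \in F(I)$, which depends on $I$ being nontrivial (otherwise $S$ might lie in $I$ and $S$ would fail to be a filter member). Beyond that the proof is a pure substitution, so it requires neither admissibility of $I$ or $K$ nor any separation hypothesis on $X$.
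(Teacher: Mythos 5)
Your proof is correct and is essentially the paper's own argument: the paper states this lemma without proof (it restates Lemma 4.1, whose proof is just the citation ``follows from Lemma 2.1''), and your choice $M=S\in F(I)$, reducing the definition of $I^{K^*}$-convergence to the hypothesis of $K^*$-convergence, is exactly what that citation unpacks to. The only quibble is immaterial: $S\in F(I)$ holds for any ideal since $S\setminus S=\phi\in I$, so even the nontriviality caveat you raise is not needed for that step.
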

\subsection{Additive Property with $I^K\& I^{K^*}$-Convergence}
We now study the relationship between $I,I^{K^*}\& I^K$-convergence. The following definition is important in this regard.
\begin{definition}\cite{9}
Let $I,K$ be ideals on the non-empty set $S$. We say that $I$ has additive property with respect to $K$ or that the condition AP$(I,K)$ holds if for every sequence of pairwise disjoint sets $A_n\in I$, there exists a sequence $B_n\in I$ such that $A_n\bigtriangleup B_n\in K$ for each $n$ and $\displaystyle \cup_{n\in \mathbb N}B_n\in I$ 
\end{definition}
Another formulation of condition AP$(I,K)$ are given in \cite{25}. Before giving this definition we need to state definition of $K$-pseudo-intersection of a system.
\begin{definition}\cite{25}
Let $K$ be an ideal on a set $S$. We write $A\subset_K B$ whenever $A\setminus B\in K.$ If $A\subset_K B$ and $B\subset_K A$ then we write $A\sim_K B$. Clearly $A\sim_K B \Leftrightarrow A\bigtriangleup B\in K$\\
We say that a set $A$ is $K$-pseudo-intersection of a system $\{A_n: n\in \mathbb N\}$ if $A\subset_K A_n$ holds for each $n\in \mathbb N$
\end{definition}
\begin{definition}\cite{25}
Let $I,K$ be ideals on the set $S$. We say that $I$ has additive property with respect to $K$ or that the condition AP$(I,K)$ holds if any of the equivalent condition of following holds:
\item[(i)] For every sequence $(A_n)_{n\in \mathbb N}$ of sets from  $I$ there is $A\in I$ such that $A_n\subset_K A$ for all $n'$s.
\item[(ii)] Any sequence $(F_n)_{n\in \mathbb N}$ of sets from $F(I)$ has $K$-pseudo-intersection in $F(I)$.
\item[(iii)] For every sequence $(A_n)_{n\in \mathbb N}$ of sets from  $I$ there exists a sequence $(B_n)_{n\in \mathbb N}\in I$ such that $A_j\sim_K B_j$ for $j\in \mathbb N$ and $B=\displaystyle{\cup_{j\in \mathbb N}}B_j\in I$.
\item[(iv)] For every sequence of mutually disjoint sets $(A_n)_{n\in \mathbb N}\in I$ there exists a sequence $(B_n)_{n\in \mathbb N}\in I$ such that $A_j\sim_K B_j$ for $j\in \mathbb N$ and $B=\displaystyle{\cup_{j\in \mathbb N}}B_j\in I$.
\item[(v)] For every non-decreasing sequence $A_1\subseteq A_2\subseteq \cdots \subseteq A_n\cdots $ of sets from $I$ $\exists$  a sequence $(B_n)_{n\in \mathbb N}\in I$ such that $A_j\sim_K B_j$ for $j\in \mathbb N$ and $B=\displaystyle{\cup_{j\in \mathbb N}}B_j\in I$.
\item[(vi)] In the Boolean algebra $2^S/K$ the ideal $I$ corresponds to a $\sigma$-directed subset,i.e. every countable subset has an upper bound.
\end{definition}
In the case $S=\mathbb N$ and $K=$ Fin we get the condition AP from \cite{19} which characterize ideal such that $I^*$-convergence implies $I$-convergence. The condition AP$(I,K)$ is more generalization of condition AP from\cite{7}\cite{19} . Ideals which fulfill the condition AP($I$,Fin) are sometimes called $P$-ideals.(see for examples \cite{1}\cite{11}) \\
In the paper \cite{25} the author showed that $I$-convergence implies $I^K$-convergence if AP$(I,K)$ holds. Here we will introduce a new theorem regarding $I$ and $I^{K^*}$-convergence.
\begin{theorem}
Let $I$ and $K$ be two ideals on a set $S$ and $X$ be a first countable topological space. If the ideal $I$ has the additive property with respect to P-ideal $K$ then $I$-convergence implies $I^{K^*}$-convergence.
\end{theorem}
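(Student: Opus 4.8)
The plan is to use first countability to replace the (possibly uncountably many) neighbourhood conditions defining $I$-convergence by a single non-decreasing sequence of sets in $I$, to feed that sequence into the hypothesis AP$(I,K)$ so as to locate a set $M\in F(I)$ on which $f$ is merely $K$-convergent to $x$, and finally to use the P-ideal property of $K$ to upgrade this $K$-convergence to $K^*$-convergence. The last step is the one that actually consumes the P-ideal hypothesis, and it is where one must be careful to invoke the cited results with the right ideals.

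First I would fix a decreasing countable neighbourhood base $U_1\supseteq U_2\supseteq\cdots$ at $x$, available since $X$ is first countable. Because $I$-$\lim f=x$, each set $A_n=f^{-1}(X\setminus U_n)=\{s\in S:f(s)\notin U_n\}$ lies in $I$, and the $A_n$ form a non-decreasing sequence since the $U_n$ decrease. Now I apply the additive property: as $I$ has AP$(I,K)$, condition (iii) of its equivalent formulations produces a sequence $(B_n)\in I$ with $A_n\bigtriangleup B_n\in K$ for every $n$ and $B=\bigcup_n B_n\in I$. Setting $M=S\setminus B\in F(I)$ and letting $g$ be the associated function ($g=f$ on $M$, $g=x$ off $M$), one computes for each $n$ that $\{s\in S:g(s)\notin U_n\}=M\cap A_n=A_n\setminus B\subseteq A_n\setminus B_n\subseteq A_n\bigtriangleup B_n\in K$, so this set belongs to $K$ by heredity. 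Since $\{U_n\}$ is a base, the same membership holds for every neighbourhood of $x$, whence $g$ is $K$-convergent to $x$.

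To finish I would upgrade $K$-convergence of $g$ to $K^*$-convergence. Here I use that $K$ is a P-ideal, that is, AP$(K,\mathrm{Fin})$ holds, together with the observation that $K^*$-convergence is exactly $K^{\mathrm{Fin}}$-convergence (the definition of $I^K$-convergence with inner ideal $\mathrm{Fin}$ reduces verbatim to the definition of $I^*$-convergence, so with $I=K$ it reduces to $K^*$-convergence). Applying the result of \cite{25} recalled above — $I$-convergence implies $I^K$-convergence whenever AP$(I,K)$ holds — with the outer ideal taken to be $K$ and the inner ideal $\mathrm{Fin}$, I conclude that $K$-convergence implies $K^{\mathrm{Fin}}=K^*$-convergence, so $g$ is $K^*$-convergent to $x$. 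By Note (4.1), the set $M\in F(I)$ together with $g$ being $K^*$-convergent witnesses that $f$ is $I^{K^*}$-convergent to $x$, which is the desired conclusion.

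I expect the genuinely delicate point to be the bookkeeping in the final paragraph: recognising $K^*$ as $K^{\mathrm{Fin}}$ and checking that the quoted implication of \cite{25} applies with $\mathrm{Fin}$ as the inner ideal, so that the P-ideal hypothesis is precisely what is needed. Everything else is routine: first countability is used only to get a countable defining family for $I$-convergence (so that the countable AP$(I,K)$ condition can act on it), and the core of the argument is the standard ``diagonalisation'' inclusion $A_n\setminus B\subseteq A_n\bigtriangleup B_n$ that places each tail set into $K$.
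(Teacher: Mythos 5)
Your proof is correct and follows essentially the same route as the paper's: use a countable base at $x$ and the AP$(I,K)$ hypothesis to produce a set $M\in F(I)$ that is a $K$-pseudo-intersection of the sets $f^{-1}(U_n)$, check that the modified function $g$ is then $K$-convergent to $x$, and finally use the P-ideal property of $K$ to upgrade this to $K^*$-convergence. The only cosmetic difference is that you invoke formulation (iii) of the AP condition applied to the sets $A_n=f^{-1}(X\setminus U_n)\in I$ (and spell out the $K^*=K^{\mathrm{Fin}}$ identification in the last step), whereas the paper applies the equivalent formulation (ii) directly to the filter sets $f^{-1}(U_n)\in F(I)$ and simply asserts the P-ideal upgrade.
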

\begin{proof}
Let $f:S\rightarrow X$ be a function such that $I$-$\lim f=x_0$. Let $\mathcal{B}=\{U_n:n\in \mathbb N\}$ be a countable base for $X$ at the point $x_0$. Now from the definition of $I$-convergence we have $f^{-1}(U_n)\in F(I)$ for each n. Thus there exists $A\in F(I)$ with $A\subset_K f^{-1}(U_n)$ for each $n$ i.e. $A\setminus f^{-1}(U_n)\in K$.
Now it suffices to show that the function the $g:S\rightarrow X$ defined by
\[g(n)=\left\{\begin {array}{ll}
        f(n) & \mbox{if $n\in A$} \\
		x_0 & \mbox{if $n\notin A$}
		\end{array}
		\right. \]
is $K^*$-convergent to $x_0$. For $U_n\in B$, we have $g^{-1}(U_n)=(S\setminus A)\cup f^{-1}(U_n)=S\setminus (A\setminus f^{-1}(U_n))$ and since the set $A\setminus f^{-1}(U_n) \in K$ so $S\setminus (A\setminus f^{-1}(U_n))\in F(K)$ i.e. $g^{-1}(U_n)\in F(K)$. Therefore $g$ is $K$-convergent to $x_0$. Since $K$ is P-ideal so $g$ is also $K^*$-convergent to $x_0$.
\end{proof}
\section{$I^K$-Limit Points}
We modify the definition of $I$-limit points in the following way:
\begin{definition}
Let $f:S\rightarrow X$ be a function and $I$  be non-trivial ideal of $S$. Then $y\in X$ is called an $I$-limit point of $f$ if there exists a set $M\subset S$ such that $M\notin I$ and the function $g:S\rightarrow X$ defined by
\[g(s)=\left\{\begin {array}{ll}
        f(s) & \mbox{if $s\in M$} \\
		y & \mbox{if $s\notin M$}
		\end{array}
		\right. \]
is Fin$(S)$-convergent to $y$.
\end{definition}
\begin{definition}
Let $f:S\rightarrow X$ be a function and $I,K$  be two non-trivial ideals of $S$. Then $y\in X$ is called an $I^K$-limit point of $f$ if there exists a set $M\subset S$ such that $M\notin I,K$ and the function $g:S\rightarrow X$ defined by
\[g(s)=\left\{\begin {array}{ll}
        f(s) & \mbox{if $s\in M$} \\
		y & \mbox{if $s\notin M$}
		\end{array}
		\right. \]
is $K$-convergent to $y$.
\end{definition}
We denote respectively by $I(L_f)$ and $I^K(L_f)$ the collection of all $I$ and $I^K$-limit points of $f$.
\begin{theorem}
If $K$ is an admissible ideal and $K\subset I$ then $I(L_f)\subset I^K(L_f)$
\end{theorem}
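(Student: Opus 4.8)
The plan is to take an arbitrary $I$-limit point $y$ of $f$ and verify directly that the very witness set supplied by the definition of $I$-limit point already serves as a witness for $y$ being an $I^K$-limit point. By definition of $y \in I(L_f)$, there is a set $M \subset S$ with $M \notin I$ such that the associated function $g$ (equal to $f$ on $M$ and to $y$ off $M$) is Fin$(S)$-convergent to $y$. The whole proof will consist in showing that this same $M$ meets the two requirements in the definition of an $I^K$-limit point, namely that $M \notin I$, $M \notin K$, and that $g$ is $K$-convergent (rather than merely Fin$(S)$-convergent) to $y$.

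First I would upgrade the Fin$(S)$-convergence of $g$ to $K$-convergence. Since $K$ is admissible it contains every singleton, hence every finite subset of $S$, so Fin$(S) \subseteq K$. Consequently, for each open $U$ containing $y$ the set $g^{-1}(X \setminus U)$ is finite and therefore lies in $K$; this is exactly the statement that $g$ is $K$-convergent to $y$. (Equivalently, one simply notes that Fin$(S)$-convergence always implies $K$-convergence whenever $K$ is admissible, which is the same fact used in several earlier proofs of the paper.)

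The one remaining point, and the only place where the hypothesis $K \subset I$ is used, is to check that $M$ also avoids $K$: the $I^K$-limit point definition demands a witness outside both $I$ and $K$, while the $I$-limit point only hands us $M \notin I$. But if $M$ belonged to $K$, then $K \subseteq I$ would force $M \in I$, contradicting $M \notin I$; hence $M \notin K$ as well. Combining this with the previous paragraph, $M$ is a set outside both $I$ and $K$ for which $g$ is $K$-convergent to $y$, which is precisely the definition of $y \in I^K(L_f)$; since $y$ was arbitrary, $I(L_f) \subseteq I^K(L_f)$. I do not anticipate any genuine obstacle here, as the argument is a direct unwinding of the two definitions — admissibility of $K$ handles the convergence upgrade, and the inclusion $K \subset I$ handles the membership condition on the witness set — so the only thing to be careful about is reading ``$M \notin I,K$'' in the definition as the conjunction $M \notin I$ and $M \notin K$.
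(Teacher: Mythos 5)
Your proof is correct and follows essentially the same route as the paper's own argument: both take the witness set $M\notin I$ from the definition of $I$-limit point, upgrade the Fin$(S)$-convergence of $g$ to $K$-convergence using admissibility of $K$, and use $K\subset I$ (via the contrapositive) to conclude $M\notin K$. No gaps; your reading of ``$M\notin I,K$'' as the conjunction is exactly what the paper intends.
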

\begin{proof}
Let $y\in I(L_f)$. Since $y$ is an $I$-limit point of the function $f:S\rightarrow X$, then there exists a set $M\notin I$ such that and the function $g:S\rightarrow X$ defined by
\[g(s)=\left\{\begin {array}{ll}
        f(s) & \mbox{if $s\in M$} \\
		y & \mbox{if $s\notin M$}
		\end{array}
		\right. \]
is Fin$(S)$-convergent to $y$. So for any open set $U$ containing $x$ the set $\{s:g(s)\notin U\}\in$ Fin. i.e. $\{s:g(s)\notin U\}$ is a finite set. So $\{s:g(s)\notin U\}\in K$, as $K$ is an admissible ideal. Therefore $g$ is $K$-convergent function. Again $M\notin I$ and $K\subset I$ so $M\notin I,K$. Thus $y$ is $I^K$-limit point of $f$ i.e. $y\in I^K(L_f)$. Hence the theorem is proved.
\end{proof}
\begin{note}
If $I$ is an admissible ideal and $I\subset K$ then $K(L_f)\subset I^K(L_f)$
\end{note}
\begin{theorem}
If every function $f:S\rightarrow X$ has an $I^K$-limit point then every infinite set $A$ in $X$ has an $\omega$-accumulation point where cardinality of $S$ is less or equal to cardinality of $A$.
\end{theorem}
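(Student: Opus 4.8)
The plan is to exploit the hypothesis by feeding it a carefully chosen function built from the set $A$. Fix an infinite set $A \subseteq X$ with $|S| \leq |A|$. Since the cardinality of $S$ does not exceed that of $A$, I can choose an injective function $f : S \to X$ whose range lies inside $A$. The hypothesis then guarantees that this particular $f$ has an $I^K$-limit point, say $y \in X$, and my goal is to show that $y$ is an $\omega$-accumulation point of $A$; that is, that every neighbourhood of $y$ meets $A$ in infinitely many points.

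By the definition of an $I^K$-limit point there is a set $M \subseteq S$ with $M \notin I$ and $M \notin K$ such that the associated function $g$ (equal to $f$ on $M$ and to $y$ off $M$) is $K$-convergent to $y$. Let $U$ be an arbitrary open neighbourhood of $y$. Since $y \in U$, every index outside $M$ is sent into $U$ by $g$, so the only indices mapped into $X \setminus U$ are those $s \in M$ with $f(s) \notin U$; hence $g^{-1}(X \setminus U) = M \setminus f^{-1}(U)$. The $K$-convergence of $g$ to $y$ means precisely that $g^{-1}(X \setminus U) \in K$, so $M \setminus f^{-1}(U) \in K$.

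The decisive step is then the following. Writing $M = (M \setminus f^{-1}(U)) \cup (M \cap f^{-1}(U))$ and using that $M \notin K$ while $M \setminus f^{-1}(U) \in K$, the ideal axioms force $M \cap f^{-1}(U) \notin K$, since otherwise $M$ would be a union of two members of $K$. Invoking that $K$ is admissible, so that $K$ contains all finite subsets of $S$, a set outside $K$ cannot be finite, whence $M \cap f^{-1}(U)$ is infinite. Because $f$ is injective, it carries this infinite set to an infinite subset of $A \cap U$, so $U$ contains infinitely many points of $A$. As $U$ was arbitrary, $y$ is an $\omega$-accumulation point of $A$, which completes the argument.

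I expect the main obstacle to be precisely the passage from $M \cap f^{-1}(U) \notin K$ to its genuine infinitude: the set-theoretic computation placing it outside $K$ is routine, but the upgrade to infinitude relies on the admissibility of $K$, namely that every finite set belongs to $K$. Without such an assumption one can only conclude that every neighbourhood of $y$ meets $A$, not that it does so infinitely often, so this is the one point where a hypothesis on $K$ beyond mere non-triviality appears to be needed. The only other detail to verify carefully is that the cardinality condition $|S| \leq |A|$ indeed furnishes the injection $f : S \to A$, which it does.
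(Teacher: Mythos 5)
Your proof is correct and follows essentially the same route as the paper's: construct an injection $f:S\rightarrow A$, extract the set $M$ from the $I^K$-limit point $y$, show $M\setminus f^{-1}(U)\in K$ for each neighbourhood $U$ of $y$, and use the ideal axioms to place the relevant preimage outside $K$, hence conclude it is infinite and lands injectively in $A\cap U$. Your explicit observation that this last step needs $K$ to be admissible (i.e. Fin$(S)\subseteq K$) is well taken: the paper's proof uses exactly this fact silently, even though the theorem's standing hypotheses only declare $I$ and $K$ non-trivial.
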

\begin{proof}
Let $A$ be an infinite  set. Define an injective function $f: S\rightarrow A\subset X$. Then $f$ has an $I^K$-limit point say $y$. Then $\exists$ a set $M\subset S$ such that $M\notin I,K$ and the function $g:S\rightarrow X$ given by
\[g(s)=\left\{\begin {array}{ll}
        f(s) & \mbox{if $s\in M$} \\
		y & \mbox{if $s\notin M$}
		\end{array}
		\right. \]
is $K$-convergent to $y$. Let $U$ be open set containing $y$ then $g^{-1}(U)=(S\setminus M)\cup f^{-1}(U)=S\setminus (M\setminus f^{-1}(U))\in F(K)$ i.e. $M\setminus f^{-1}(U)\in K$. So $f^{-1}(U)\notin K$.(For if $f^{-1}(U)\in K$ then we get $M\in K$, which is a contradiction.) So $\{s:f(s)\in U\}$ is an infinite set. Consequently $U$ contains infinitely many points of the function $f(s)$ in $X$. So $U$ contains infinitely many elements of $A$. Thus $y$ becomes $\omega$-accumulation point of $A$.
\end{proof}
\begin{theorem}
If $X,\tau$ is a Lindelof space such that every function $f: \mathbb N\rightarrow X$ has an $I^K$-limit point then $(X,\tau)$ is compact. 
\end{theorem}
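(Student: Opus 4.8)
The plan is to use the Lindelof property to reduce compactness to a statement about countable covers, and then to manufacture a function $f:\mathbb{N}\to X$ whose values escape successively larger initial segments of a fixed countable cover, forcing its guaranteed $I^K$-limit point into a contradiction. Since $X$ is Lindelof, every open cover of $X$ reduces to a countable subcover, so it suffices to show that every \emph{countable} open cover of $X$ possesses a finite subcover; I would argue this by contradiction.

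Concretely, suppose $\{U_n:n\in\mathbb{N}\}$ is a countable open cover of $X$ admitting no finite subcover. Then for each $n$ the set $X\setminus\bigcup_{i=1}^{n}U_i$ is non-empty, so I may pick a point $x_n\in X\setminus\bigcup_{i=1}^{n}U_i$ and define $f:\mathbb{N}\to X$ by $f(n)=x_n$. By the standing hypothesis, $f$ has an $I^K$-limit point $y\in X$, and since $\{U_n\}$ covers $X$ there is an index $m$ with $y\in U_m$.

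Next I would unwind the definition of $I^K$-limit point exactly as in the proof of the preceding theorem. There is a set $M\subseteq\mathbb{N}$ with $M\notin I$ and $M\notin K$ such that the associated function $g$, equal to $f$ on $M$ and to $y$ outside $M$, is $K$-convergent to $y$. Applying this to the open neighbourhood $U_m$ of $y$ gives $g^{-1}(U_m)=\mathbb{N}\setminus\bigl(M\setminus f^{-1}(U_m)\bigr)\in F(K)$, hence $M\setminus f^{-1}(U_m)\in K$; since $M\notin K$, this forces $f^{-1}(U_m)\notin K$.

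The contradiction then comes from the construction of $f$. By the choice of the points $x_n$, for every $n\ge m$ we have $f(n)=x_n\notin U_m$, so $f^{-1}(U_m)\subseteq\{1,2,\dots,m-1\}$ is a finite set; as $K$ is admissible, every finite set lies in $K$, giving $f^{-1}(U_m)\in K$ and contradicting $f^{-1}(U_m)\notin K$. Therefore the countable cover must admit a finite subcover, and $X$ is compact. I expect the only genuinely delicate points to be the reduction to countable covers via Lindelofness and the escaping choice of $x_n$ that makes $f^{-1}(U_m)$ finite; the admissibility of $K$, which places finite sets into $K$, is precisely the ingredient that closes the argument.
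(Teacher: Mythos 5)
Your proof is correct and follows essentially the same route as the paper: reduce to a countable subcover by Lindelofness, build a function $f:\mathbb{N}\to X$ whose values escape every initial segment of the cover, and use the guaranteed $I^K$-limit point $y\in U_m$ to conclude that $f^{-1}(U_m)$ is simultaneously not in $K$ (since $M\setminus f^{-1}(U_m)\in K$ but $M\notin K$) and finite, a contradiction. The only cosmetic difference is that you pick $x_n\notin\bigcup_{i=1}^{n}U_i$ directly, whereas the paper first performs a greedy sub-selection $B_1,B_2,\dots$ from the countable cover; note also that both arguments (yours explicitly, the paper's implicitly in the step ``$f^{-1}(B_p)\notin K$ hence infinite'') rely on $K$ being admissible, a hypothesis absent from the theorem statement itself.
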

\begin{proof}
Let $(X,\tau)$ be a Lindelof space such that every $f: N\rightarrow X$ has an $I^K$-limit point. We have to show that any open cover of space $X$ has a finite subcover. Let $\{A_\alpha: \alpha\in \wedge\}$ be an open cover of the space $X$, where $\wedge$ is an index set. Since $(X,\tau)$ is a Lindelof space so this open cover admits a countable sub-cover say $\{A_1,A_2,\cdots,A_n,\cdots\}$. Proceeding inductively let $B_1=A_1$ and for each $m>1$, let $B_m$ be the first member of the sequence of $A'$s which is not covered by $B_1\cup B_2\cup B_3\cup \cdots\cup B_{m-1} $. If this choice becomes impossible at any stage then the sets already selected becomes a required finite sub-cover. Otherwise it is possible to select a point $b_n$ in $B_n$ for each positive integer n such that $b_n\notin B_r, r<n$. \\
Let $f: \mathbb N \rightarrow X$ be a function defined by $f(n)=b_n$. Now let $x$ be an $I^K$-limit point of the function $f$. Then $x\in B_p$ for some $p$. Now from the definition of $I^K$-limit point we get $g^{-1}(B_p)=(\mathbb N\setminus M)\cup f^{-1}(B_p)=\mathbb N\setminus (M\setminus f^{-1}(B_p))\in F(K)$ i.e. $M\setminus f^{-1}(B_p)\in K$. So the set $S=f^{-1}(B_p)=\{n\in \mathbb N:f(x_n)\in B_p\}\notin K$. Hence $S$ must be an infinite subset of $\mathbb N$. So there is some $q>p$ such that $q\in S$ i.e. there exists some $q>p$ such that $f(x_q)\in B_p$ which leads to a contradiction. Thus the result follows.
\end{proof}
%%%%%%%%%%%%%%%%%%%%%%%%%%%%%%%%%%%
\section{Acknowledgment}
The second author is grateful to the University of Burdwan for providing State Fund Fellowship during the preparation of this work.
%%%%%%%%%%%%%%%%%%%%%%%%%%%%%%%%%%%

\end{document}